\newtheorem{theorem}{Theorem}
\newtheorem{lemma}[theorem]{Lemma}
\title{Effective Upper Bound Estimates for $|\zeta'(1/2+it)|$ via Exponential Sums}
\author{Ting, Liu}
\address{Department of Mathematics, China Three Gorges University, Yichang, 443002, China}
\email{2460212166@qq.com}
\author{ Jinjin, Ma}
\address{Department of Mathematics, China Three Gorges University, Yichang, 443002, China}
\email{2182476928@qq.com}
\author{BinJie, Chang}
\address{Department of Mathematics, China Three Gorges University, Yichang, 443002, China}
\email{1544259484@qq.com}
\author{Xinhua Xiong}
\address{Department of Computer Sciences and Information, China Three Gorges University, Yichang, 443002, China}
\email{xinhuaxiong@ctgu.edu.cn}
\begin{document}
\begin{abstract}
In this paper, we use methods of exponential sums to derive a formula for estimating effective upper bounds of $|\zeta'(1/2+it)|$. Different effective upper bounds can be obtained by choosing different parameters.
\end{abstract}
\maketitle


\section{Introduction}
The Riemann zeta function is defined by
\[
\zeta(s) = \sum_{n=1}^{\infty} n^{-s} \quad (\sigma > 1), \quad s = \sigma + it.
\]
Using the Euler–Maclaurin summation formula, we have
\begin{equation}\label{30}
\begin{aligned}
\zeta(s) &= \sum_{n=1}^{N-1} n^{-s} + \frac{N^{1-s}}{s-1} + \frac{1}{2}N^{-s} + \frac{B_2}{2}sN^{-s-1} + \cdots \\
&\quad + \frac{B_{2v}}{(2v)!}s(s+1)\cdots(s+2v-2)N^{-s-2v+1} + R_{2v},
\end{aligned}
\end{equation}
where $B_m$ are Bernoulli polynomials, $N$ is a positive integer, $v \geq 0$ is a natural number, and
\[
R_{2v} = -\frac{s(s+1)\cdots(s+2v-1)}{(2v)!} \int_N^{\infty} \bar{B}_{2v}(x)x^{-s-2v}dx.
\]
The integral for $R_{2v}$ converges, and \eqref{30} holds in the half-plane $\text{Re}(s+2v+1) > 1$.

When $v=0$, the formula simplifies to
\[
\zeta(s) = \frac{s}{s-1} - s \int_1^{\infty} \frac{u - [u]}{u^{s+1}} du,
\]
and
\[
\zeta(s) = \frac{1}{s} + \frac{1}{2} - s \int_1^{\infty} \frac{u - [u] - \frac{1}{2}}{u^{s+1}} du.
\]

In 1985, Titchmarsh \cite{5} proved
\[
\zeta'(\sigma + it) \le \exp\left( \frac{C \log t}{\log \log t} \right), \quad \sigma \geq 1/2, \quad t \geq 10.
\]
For effective upper bounds of the zeta function and its derivatives, see the references.

In this paper, using exponential sum methods, we obtain effective estimates for $|\zeta'(1/2+it)|$:

\begin{theorem}\label{theorem1.1}
If $t \geq e^2$, then
\[
|\zeta'(1/2+it)| \leq 2t^{1/2} \log t - 4t^{1/2} + 8.047 \log t + 6.399.
\]
\end{theorem}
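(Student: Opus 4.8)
The plan is to start from the $v=0$ instance of \eqref{30}, which for an integer $N\ge 1$ and $\sigma>0$ reads
\[
\zeta(s)=\sum_{n=1}^{N-1}n^{-s}+\frac{N^{1-s}}{s-1}+\frac12 N^{-s}-s\int_N^\infty\frac{u-[u]-\frac12}{u^{s+1}}\,du ,
\]
to differentiate it in $s$, to set $s=1/2+it$, and to choose $N=\lfloor t\rfloor$. Differentiating gives
\[
\zeta'(1/2+it)=-\sum_{n=1}^{N-1}(\log n)\,n^{-s}-\frac{N^{1-s}\log N}{s-1}-\frac{N^{1-s}}{(s-1)^2}-\frac12(\log N)N^{-s}+R ,
\]
where $R=-\int_N^\infty\frac{u-[u]-\frac12}{u^{s+1}}\,du+s\int_N^\infty\frac{(u-[u]-\frac12)\log u}{u^{s+1}}\,du$. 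I would then bound these five contributions separately and combine them by the triangle inequality.

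For the Dirichlet polynomial, $|\sum_{n<N}(\log n)n^{-s}|\le\sum_{n<N}(\log n)n^{-1/2}$. Because $x\mapsto(\log x)x^{-1/2}$ increases on $[1,e^2]$ and decreases on $[e^2,\infty)$, I would bound the finitely many terms with $n\le 7$ numerically and compare the decreasing tail $n\ge 8$ with $\int_7^{N}(\log x)x^{-1/2}\,du$; since $\int_1^{N}(\log x)x^{-1/2}\,dx=2N^{1/2}\log N-4N^{1/2}+4$ and $2x^{1/2}\log x-4x^{1/2}$ is increasing, this yields $\sum_{n<N}(\log n)n^{-1/2}\le 2t^{1/2}\log t-4t^{1/2}+c_1$ with an explicit constant $c_1$ (using $N=\lfloor t\rfloor\le t$). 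The three ``analytic-continuation'' terms are each $O(t^{-1/2}\log t)$, since $|N^{1-s}|=N^{1/2}\le t^{1/2}$, $|s-1|\ge t$, and $|N^{-s}|=N^{-1/2}$; for $t\ge e^2$ they are therefore bounded by small explicit constants.

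The crux is $R$. The first integral is $O(N^{-1/2})$ via $|u-[u]-\tfrac12|\le\tfrac12$. In the second, however, the factor $|s|\asymp t$ makes the triangle inequality worthless — it would reinstate a term of size $\asymp t^{1/2}\log t$ and so destroy the main term — and, tellingly, repeated integration by parts against the periodic Bernoulli polynomials does not help, because each step trades a saving of order $N^{-1}\asymp t^{-1}$ for a fresh factor of order $s$. One has to use the oscillation of $u^{-it}=e^{-it\log u}$ instead. Inserting the Fourier expansion $u-[u]-\tfrac12=-\sum_{k\ge1}(\pi k)^{-1}\sin(2\pi k u)$ turns the integral into a series of exponential integrals $\int_N^\infty(\log u)u^{-3/2}\,e^{\,i(\pm 2\pi k u-t\log u)}\,du$, and, since $N=\lfloor t\rfloor\asymp t$ exceeds $t/2\pi$, the phase derivative $\pm 2\pi k-t/u$ has modulus at least $2\pi-t/\lfloor t\rfloor>0$ on $[N,\infty)$, hence is bounded away from $0$. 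The first-derivative test for exponential integrals, applied with the monotone integrable weight $(\log u)u^{-3/2}$, then bounds the $k$-th integral by a constant times $k^{-1}(\log N)N^{-3/2}$; summing over $k$ and multiplying by $|s|$ keeps $R$ of lower order, with explicit constants. (Equivalently, one may pass from this integral back to the exponential sum $\sum_{n>N}n^{-s}$ by partial summation and apply van der Corput's derivative tests — the exponential-sums route signalled by the title.) Collecting the five bounds and using $t\ge e^2$ to absorb the lower-order pieces into the coefficients yields the stated inequality, with $8.047$ and $6.399$ the resulting numerical constants. I expect the genuinely delicate step to be precisely this treatment of $R$: carrying out the exponential-integral (or exponential-sum) estimates with fully explicit constants — in particular a clean uniform lower bound for the phase derivative and the precise dependence on $k$ — so that $R$ provably contributes only a lower-order term rather than something of the size of the main term.
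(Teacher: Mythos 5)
Your outline is sound and would lead to a bound of the stated form, but the paper organizes the computation differently, and the difference bears exactly on the step you flag as the crux. Rather than taking $N=\lfloor t\rfloor$ and then confronting the infinite-range remainder $R$, the paper takes $N=[t^2]$ in the differentiated Euler--Maclaurin formula (their equation \eqref{3}). With that choice the analogue of your $R$ --- their $E(s)=A+B+C+D$, consisting of the two tail integrals and the two rational terms --- is \emph{trivially} $O(\log t)$ by the triangle inequality alone (Lemma~\ref{lemma2.3}), because the factor $N^{-\sigma}=N^{-1/2}\asymp t^{-1}$ coming from $\int_N^\infty (\log u)\,u^{-3/2}\,du$ cancels the factor $|s|\asymp t$; no oscillation is needed there at all. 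The oscillation is instead exploited on the middle range: the finite Dirichlet polynomial $\sum_{n\le[t^2]}$ is split as $\sum_{n\le t}+\sum_{t<n\le t^2}$; the first piece gives the main term $2t^{1/2}\log t-4t^{1/2}$ by integral comparison (as you propose), and the second piece is handled by Euler summation (Lemma~\ref{lemma2.4}), which produces a finite-range oscillatory integral $I_5=-s\int_t^{t^2}((x))\,x^{-s-1}\log x\,dx$ of exactly the same nature as your $R$. The paper then, as you propose, expands $((x))$ into its sine series and applies a first-derivative test in the form of Lemma~\ref{lemma2.2}, with the uniform lower bound on the phase derivative coming from $a>t/2\pi$, i.e.\ $2\pi v a-t>0$. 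So the analytic heart --- Fourier expansion of the sawtooth plus a monotone first-derivative test with an explicit lower bound on the phase derivative --- is identical in both routes, and both yield lower-order contributions to $I_5$ (resp.\ $R$). What the choice $N=[t^2]$ buys is that the troublesome integral lives on a finite interval $[t,t^2]$, so the constants are bookkept slightly more cheaply and there is no need to discuss convergence of the Fourier expansion on an infinite range; what your $N=\lfloor t\rfloor$ buys is one fewer application of Euler summation. One small correction: your remark that integration by parts ``does not help'' because each step trades a saving $N^{-1}\asymp t^{-1}$ for a factor of order $s$ is true only for $N\asymp t$; once one takes $N\asymp t^2$ that very trade becomes favourable, and that is exactly the paper's shortcut for the tail.
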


\begin{theorem}\label{theorem1.2}
If $t \geq e^6$, then
\[
|\zeta'(1/2+it)| \leq Q_1 t^{1/6} (\log t)^2 + Q_2 t^{1/6} \log t + Q_3 t^{1/6} + Q_4 (\log t)^2 + Q_5 \log t + Q_6,
\]
where $Q_1(\tau,q,t_2), Q_2(k,\tau,q,t_2), Q_3(k,\tau,q,t_2), Q_4(\tau,t_2), Q_5(k,\tau,q,t_2), Q_6(k,\tau,q,t_1,t_2)$ are constants, and $k, \tau, q, t_1, t_2$ are parameters to be determined.
\end{theorem}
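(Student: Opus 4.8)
\emph{Sketch of a proof of Theorem~\ref{theorem1.2}.}
The plan is to reduce $\zeta'(1/2+it)$ to short exponential sums by an effective approximate functional equation, to bound those sums by van der Corput's method with explicit constants, and finally to optimise over the parameters $k,\tau,q,t_1,t_2$. Concretely, I would first produce a finite-sum representation: starting from the approximate functional equation for $\zeta$ (derived from \eqref{30} together with the functional equation $\zeta(s)=\chi(s)\zeta(1-s)$), taken in the asymmetric form $2\pi xy=t$ with the split between $x$ and $y$ governed by $\tau$, I differentiate in $s$. Using $|\chi(1/2+it)|=1$ and $\frac{\chi'}{\chi}(1/2+it)=-\log\frac{t}{2\pi}+O(1/t)$ with an explicit constant, this gives
\[
\zeta'(1/2+it)=-\sum_{n\leq x}\frac{\log n}{n^{1/2+it}}+\chi(1/2+it)\left(-\log\tfrac{t}{2\pi}\sum_{n\leq y}\frac{1}{n^{1/2-it}}+\sum_{n\leq y}\frac{\log n}{n^{1/2-it}}\right)+E(t),
\]
where $E(t)$ is an explicit remainder whose size, together with that of the differentiated tail terms produced along the way, is $\ll t^{1/6}$ (indeed far smaller) and feeds the constants $Q_4,Q_5,Q_6$. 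Performing this reduction with every constant made numerically explicit and uniform for $t\geq e^6$ is the first block of work.

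Next I would estimate the sums. Each of them has the shape $\sum_{n\leq X}a_n n^{-1/2}e^{-it\log n}$ with $X\asymp\sqrt t$ and $a_n\in\{1,\log n\}$; writing $f(u)=-\frac{t}{2\pi}\log u$, one has $|f^{(j)}(u)|\asymp t\,u^{-j}$ for each fixed $j$. I split the range at $t_1$ and $t_2$: on $[1,t_1]$ I use the trivial bound $\sum_{n\leq t_1}(\log n)n^{-1/2}\leq 2t_1^{1/2}\log t_1$; on $(t_1,X]$ I decompose dyadically into blocks $M<n\leq 2M$, pull the factors $n^{-1/2}\leq M^{-1/2}$ and $\log n\leq\log 2M$ out by partial summation, and reduce to bounding $\sum_{M<n\leq 2M}e^{-it\log n}$. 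To this block I apply the Weyl--van der Corput inequality (the $A$-process, with differencing length $q$) followed by the $k$-th derivative test on each differenced sum; with $q\asymp Mt^{-1/3}$ and $k=2$, switching to a larger $k$ or back to the trivial bound in whichever of the subranges delimited by $t_2$ is sharper, this gives $\sum_{M<n\leq 2M}e^{-it\log n}\ll M^{1/2}t^{1/6}$, i.e.\ the exponent pair $(1/6,2/3)$ with a computable constant, valid for $M\leq t^{2/3}$ and hence for all $M\leq X$. After weighting, each block contributes $\ll t^{1/6}\log 2M$, and summing over the $\ll\log X$ dyadic blocks (with $\log X\asymp\tfrac12\log t$) yields $Q_1 t^{1/6}(\log t)^2+Q_2 t^{1/6}\log t+Q_3 t^{1/6}$; the factor $\log\frac{t}{2\pi}$ multiplying the unweighted $y$-sum feeds the same powers of $\log t$. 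Collecting all the pieces and then choosing $k,\tau,q,t_1,t_2$ produces the constants $Q_1,\dots,Q_6$, and distinct admissible choices give distinct numerical bounds, exactly as the statement allows.

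The main obstacle is twofold: (i) obtaining a clean, fully explicit remainder $E(t)$ in the differentiated approximate functional equation, the remainder and the quantity $\chi'/\chi$ having to be controlled with care near the bottom of the admissible range $t=e^6$; and (ii) keeping the constant in the composed estimate ``$A$-process followed by $k$-th derivative test'' under control, since the exponent pair $(1/6,2/3)$ appears only after two successive van der Corput-type steps, each of which inflates the constant — the optimisation over $q$ (and $t_1,t_2$) is precisely what keeps $Q_1,\dots,Q_6$ from blowing up. The dyadic decomposition, the partial-summation manipulations, and the trivial estimate on $[1,t_1]$ are, by contrast, routine.
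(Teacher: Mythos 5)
Your plan takes a genuinely different route from the paper's, and it also has a real gap. The paper never invokes the functional equation or $\chi(s)$: it works from the Euler--Maclaurin representation \eqref{3} with $N=[t^2]$, which forces the main sum to run all the way to $t^2$, and then splits at $t^{1/3}$, $t^{2/3}$, $t$, $t^2$. The ranges $n\leq t^{1/3}$ and $t<n\leq t^2$ are handled with no exponential-sum machinery at all: the first trivially (see \eqref{50}), the second by expanding the sawtooth $((x))$ into its Fourier series and integrating by parts (Lemmas~\ref{lemma2.1}, \ref{lemma2.2}, \ref{lemma2.4}). Only the two middle ranges see van der Corput technology: $t^{2/3}<n\leq t$ via the second-derivative test alone (Lemma~\ref{lemma4.1}), and $t^{1/3}<n\leq t^{2/3}$ via Weyl differencing followed by the second-derivative test (Lemmas~\ref{lemma4.3}--\ref{lemma4.6}). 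The blocks are geometric of ratio $k$ resp.\ $\tau$, not dyadic; the weight $(\log n)n^{-1/2}$ is peeled off not by partial summation but by the vertex-extremality trick of Lemma~\ref{lemma2.5}; and $t_1,t_2$ are low-$t$ cutoffs below which a crude integral bound such as \eqref{100} is used, not dyadic split points. Your proposal differs on every one of these design choices and is recognisably a different argument, closer to the standard exponent-pair treatment of $\zeta(1/2+it)$ than to what the paper actually does.

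The substantive gap is the one you flag only in passing: your reduction requires an effective, numerically explicit differentiated approximate functional equation. Making the Riemann--Siegel remainder explicit is already a delicate piece of work, and after differentiating in $s$ you must additionally control $\chi'/\chi$ and the resulting new tail terms with explicit constants uniformly down to $t=e^6$; none of this is supplied. The paper deliberately sidesteps the entire issue by paying instead with the longer $t<n\leq t^2$ block, which it dispatches cheaply in Lemma~\ref{lemma2.4}. Your exponent counting for the $(1/6,2/3)$ pair is correct (each weighted dyadic block below $\sqrt t$ contributes $O(t^{1/6}\log t)$, and there are $O(\log t)$ of them, giving the $t^{1/6}(\log t)^2$ main term), but until the explicit reduction step is actually carried out, your sketch cannot produce the constants $Q_1,\dots,Q_6$ that the theorem asserts.
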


To estimate $|\zeta'(1/2+it)|$, we use the representation
\[
\zeta'(s) = -\sum_{n=1}^N \frac{\log n}{n^s} - \int_N^{\infty} \frac{u - [u]}{u^{s+1}} du + s \int_N^{\infty} \frac{u - [u]}{u^{s+1}} \log u \, du - \frac{1}{(s-1)^2 N^{s-1}} - \frac{\log N}{(s-1)N^{s-1}},
\]
and divide the exponential sum part into $\sum_{n \leq t^{1/3}} \frac{\log n}{n^s}$, $\sum_{t^{1/3} < n \leq t^{2/3}} \frac{\log n}{n^s}$, $\sum_{t^{2/3} < n \leq t} \frac{\log n}{n^s}$, $\sum_{t < n \leq N} \frac{\log n}{n^s}$. In Section 2, we prove some lemmas and estimate $\sum_{t < n \leq N} \frac{\log n}{n^s}$. In Section 3, we directly integrate to obtain an effective upper bound for $|\zeta'(1/2+it)|$. In Section 4, using exponential sum methods, we estimate $\sum_{t^{1/3} < n \leq t^{2/3}} \frac{\log n}{n^s}$ and $\sum_{t^{2/3} < n \leq t} \frac{\log n}{n^s}$, and combine the results to get the final effective upper bound for $|\zeta'(1/2+it)|$.

\section{Lemmas}
\begin{lemma}\label{lemma2.1} \cite[Lemma 1]{6}
If $f$ and $g$ are differentiable real-valued functions, and $f > 0$ is decreasing (or $f < 0$ is increasing), then
\begin{equation}\label{31}
\left| \int_a^b f(x) g'(x) dx \right| = 2 |f(a)| \max_{a \leq x \leq b} |g(x)|.
\end{equation}
\end{lemma}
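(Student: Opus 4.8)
The plan is to establish the stated bound (where the displayed equality sign should be read as $\le$, the right-hand side being a majorant) by a single integration by parts that trades the derivative on $g$ for a derivative on $f$, after which the monotonicity hypothesis does all the remaining work.

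First I would integrate by parts: since $f$ and $g$ are differentiable on $[a,b]$,
\[
\int_a^b f(x)g'(x)\,dx = f(b)g(b) - f(a)g(a) - \int_a^b f'(x)g(x)\,dx .
\]
Write $M := \max_{a\le x\le b}|g(x)|$, which exists since $g$ is continuous on the closed interval. The two boundary terms are bounded in absolute value by $|f(a)|M + |f(b)|M$, and the remaining integral by $M\int_a^b|f'(x)|\,dx$.

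The key step is to evaluate $\int_a^b|f'(x)|\,dx$ using the monotonicity. Suppose first that $f>0$ is decreasing, so $f'\le 0$ on $[a,b]$; then $|f'| = -f'$ and $\int_a^b|f'(x)|\,dx = f(a) - f(b)$, while $|f(a)| = f(a)$ and $|f(b)| = f(b)$ because $f$ is positive. Combining,
\[
\left|\int_a^b f(x)g'(x)\,dx\right| \le f(a)M + f(b)M + M\bigl(f(a)-f(b)\bigr) = 2f(a)M = 2|f(a)|M,
\]
which is exactly the claim. The case $f<0$ increasing reduces to the previous one: $-f$ is positive and decreasing, the left-hand side is unchanged when $f$ is replaced by $-f$, and so is $2|f(a)|M$.

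I do not expect a genuine obstacle, as the argument is entirely elementary. The only point requiring a little attention is that, because $f$ keeps a constant sign on $[a,b]$, the triangle-inequality estimate of the boundary terms and the exact evaluation of $\int_a^b|f'|$ fit together so that the resulting constant is precisely $2$; a careless treatment of signs would still yield a bound, merely with a larger constant.
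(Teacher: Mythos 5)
Your proof is correct, and you rightly read the displayed ``$=$'' as ``$\le$''. The paper itself does not prove Lemma~\ref{lemma2.1}; it cites it from \cite[Lemma 1]{6}, so there is no in-paper argument to match against. Your route is integration by parts, pushing the derivative from $g$ onto $f$ and then using the sign and monotonicity of $f$ to evaluate $\int_a^b|f'|$ exactly; the constant $2$ then emerges from $|f(a)|+|f(b)|+(f(a)-f(b)) = 2f(a)$. The more classical route (and the one typically used for this statement) is Bonnet's form of the second mean value theorem: for $f>0$ decreasing and $h$ integrable, $\int_a^b f(x)h(x)\,dx = f(a)\int_a^{\xi}h(x)\,dx$ for some $\xi\in[a,b]$; taking $h=g'$ gives $\int_a^b f g' = f(a)\bigl(g(\xi)-g(a)\bigr)$, whose absolute value is at most $2f(a)\max|g|$. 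The two approaches buy slightly different things. Bonnet's theorem needs only that $f$ be monotone (no differentiability of $f$ at all) and that $g'$ be integrable, and it sidesteps any question about $\int|f'|$; your integration-by-parts argument implicitly uses that $f'$ is integrable and that the fundamental theorem of calculus applies to $f$, which is automatic if $f$ is $C^1$ but is a genuine extra hypothesis under bare pointwise differentiability. Since in this paper the lemma is only ever applied to smooth explicit functions such as $\frac{\log x}{x^{\sigma}(2\pi v x \mp t)}$, the distinction is immaterial here, but it is worth knowing that the mean-value-theorem proof is both shorter and strictly more general.
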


\begin{lemma}\label{lemma2.2}
Let $b > a > \frac{t}{2\pi}$. For any positive integer $V$ and $\sigma \geq 0$, we have
\begin{subequations}
\begin{equation}\label{2.2a}
\left| \int_a^b \frac{\sin(t \log x \pm 2\pi v x)}{x^{1+\sigma}} \log x \, dx \right| \leq \frac{2 \log a}{a^\sigma (2\pi v a \pm t)},
\end{equation}
where the ``$\pm$'' signs are all ``$+$'' or all ``$-$''. Moreover,
\begin{equation}\label{2.2b}
\left| \int_a^b \frac{\sin(t \log x + 2\pi v x) \pm \sin(t \log x - 2\pi v x)}{x^{1+\sigma}} \log x \, dx \right| \leq \frac{8\pi v a \log a}{a^\sigma (4\pi^2 v^2 a^2 - t^2)}.
\end{equation}
If we replace the sine functions with cosine functions, from inequality \eqref{2.2b} we obtain
\begin{equation}\label{2.2c}
\left| \int_a^b \frac{\sin(t \log x) \sin(2\pi v x)}{x^{1+\sigma}} \log x \, dx \right| \leq \frac{8\pi v a \log a}{a^\sigma (4\pi^2 v^2 a^2 - t^2)},
\end{equation}
and
\begin{equation}\label{2.2d}
\left| \int_a^b \frac{\cos(t \log x) \sin(2\pi v x)}{x^{1+\sigma}} \log x \, dx \right| \leq \frac{8\pi v a \log a}{a^\sigma (4\pi^2 v^2 a^2 - t^2)}.
\end{equation}
\end{subequations}
\end{lemma}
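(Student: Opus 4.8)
The plan is to reduce all four estimates to the integration-by-parts bound of Lemma~\ref{lemma2.1} and then to read off \eqref{2.2b}--\eqref{2.2d} from \eqref{2.2a} by elementary trigonometry. For the ``$+$'' case of \eqref{2.2a} I would set $h(x)=t\log x+2\pi v x$, note $h'(x)=t/x+2\pi v>0$ on $[a,b]$, and write
\[
\int_a^b\frac{\sin(t\log x+2\pi v x)}{x^{1+\sigma}}\log x\,dx=\int_a^b f(x)\,g'(x)\,dx,\qquad f(x)=\frac{\log x}{x^{1+\sigma}h'(x)}=\frac{\log x}{x^{\sigma}(t+2\pi v x)},\quad g(x)=-\cos h(x),
\]
since $f(x)g'(x)=\frac{\log x}{x^{1+\sigma}h'(x)}\cdot h'(x)\sin h(x)$ is precisely the integrand. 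As $|g|\le 1$, Lemma~\ref{lemma2.1} bounds the integral by $2|f(a)|=\frac{2\log a}{a^{\sigma}(2\pi v a+t)}$, which is the right-hand side of \eqref{2.2a}. The ``$-$'' case is identical with $h(x)=t\log x-2\pi v x$: now $h'(x)=t/x-2\pi v$, and $a>t/(2\pi)\ge t/(2\pi v)$ forces $h'<0$ on $[a,b]$, so $f(x)=\frac{\log x}{x^{\sigma}(t-2\pi v x)}<0$ with $|f(a)|=\frac{\log a}{a^{\sigma}(2\pi v a-t)}$; Lemma~\ref{lemma2.1} in its ``$f<0$ increasing'' form again contributes the factor $2|f(a)|$. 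Replacing $g(x)=-\cos h(x)$ by $g(x)=\sin h(x)$ yields the cosine analogue of \eqref{2.2a} with the same right-hand side, since then $f(x)g'(x)=\frac{\log x}{x^{1+\sigma}}\cos h(x)$.

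The point requiring real care is verifying the hypothesis of Lemma~\ref{lemma2.1}, i.e.\ that $f$ is positive and decreasing (resp.\ negative and increasing) on $[a,b]$. I would compute the logarithmic derivative
\[
\frac{f'(x)}{f(x)}=\frac{1}{x\log x}-\frac{\sigma}{x}-\frac{2\pi v}{2\pi v x\pm t}
\]
(the sign matching that in \eqref{2.2a}), and use $2\pi v x\ge 2\pi v a>t$ to get $\frac{2\pi v}{2\pi v x\pm t}>\frac{1}{2x}$; hence $\frac{f'(x)}{f(x)}<\frac{1}{x}\bigl(\frac{1}{\log x}-\sigma-\tfrac12\bigr)\le 0$ whenever $\log x\ge\frac{2}{2\sigma+1}$, i.e.\ whenever $a\ge e^{2/(2\sigma+1)}$, a condition met in every application of the lemma below, where $a$ is at least $e^{2}$. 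This is the only genuinely delicate step; everything afterwards is bookkeeping.

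For \eqref{2.2b} I would not even use a product-to-sum identity but simply split the numerator and apply \eqref{2.2a} with both signs:
\[
\Bigl|\int_a^b\frac{\sin(t\log x+2\pi v x)\pm\sin(t\log x-2\pi v x)}{x^{1+\sigma}}\log x\,dx\Bigr|\le\frac{2\log a}{a^{\sigma}(2\pi v a+t)}+\frac{2\log a}{a^{\sigma}(2\pi v a-t)}=\frac{8\pi v a\log a}{a^{\sigma}(4\pi^2v^2a^2-t^2)},
\]
the last equality being the common-denominator simplification, valid for either choice of the inner sign. Finally \eqref{2.2c} and \eqref{2.2d} follow from this together with $\sin(t\log x)\sin(2\pi v x)=\tfrac12[\cos(t\log x-2\pi v x)-\cos(t\log x+2\pi v x)]$ and $\cos(t\log x)\sin(2\pi v x)=\tfrac12[\sin(t\log x+2\pi v x)-\sin(t\log x-2\pi v x)]$: applying \eqref{2.2a} (and its cosine analogue for the first identity) to the two terms, each left-hand side is at most $\tfrac12\bigl(\frac{2\log a}{a^{\sigma}(2\pi v a+t)}+\frac{2\log a}{a^{\sigma}(2\pi v a-t)}\bigr)$, hence a fortiori at most $\frac{8\pi v a\log a}{a^{\sigma}(4\pi^2v^2a^2-t^2)}$.
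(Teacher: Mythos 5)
Your argument is correct and follows essentially the same route as the paper's: both treat \eqref{2.2a} by integration by parts (equivalently, Lemma~\ref{lemma2.1} with $f(x)=\frac{\log x}{x^{\sigma}(2\pi vx\pm t)}$ and $g=\mp\cos h$, and with $g=\sin h$ for the cosine analogue), and then obtain \eqref{2.2b}--\eqref{2.2d} from \eqref{2.2a} by the triangle inequality together with the product-to-sum identities. The one place you go beyond the paper is in actually verifying that $f$ is positive and monotone on $[a,b]$ --- the paper asserts this as ``clear,'' but as your computation of $f'/f$ shows, monotonicity requires the extra condition $a\ge e^{2/(2\sigma+1)}$ (and positivity requires $a>1$), neither of which follows from the stated hypothesis $a>t/(2\pi)$ alone; this is a genuine, if latent, gap in the lemma as stated, though harmless here since every application in the paper has $\sigma=1/2$ and $a\ge e^2$.
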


\begin{proof}
The integral on the left-hand side of \eqref{2.2a} is
\[
- \int_a^b \frac{\log x}{x^{1+\sigma}} \frac{d\{ \cos(t \log x \pm 2\pi v x) \}}{(t/x \pm 2\pi v)} = - \int_a^b \frac{\log x}{x^\sigma (t \pm 2\pi v x)} d\{ \cos(t \log x \pm 2\pi v x) \}.
\]
For $x \geq a > \frac{t}{2\pi}$, clearly $f(x) = \frac{\log x}{x^\sigma (2\pi v x \mp t)} > 0$ and decreasing. Replacing $f(x)$ with $-f(x)$ in Lemma \ref{lemma2.1}, we get
\[
\left| \int_a^b \frac{\sin(t \log x \pm 2\pi v x)}{x^{1+\sigma}} \log x \, dx \right| \leq \frac{2 \log a}{a^\sigma (2\pi v a \pm t)}.
\]
Next, to prove \eqref{2.2b},
\begin{align*}
&\left| \int_a^b \frac{\sin(t \log x + 2\pi v x) \pm \sin(t \log x - 2\pi v x)}{x^{1+\sigma}} \log x \, dx \right| \\
&\quad \leq \left| \int_a^b \frac{\sin(t \log x + 2\pi v x)}{x^{1+\sigma}} \log x \, dx \right| + \left| \int_a^b \frac{\sin(t \log x - 2\pi v x)}{x^{1+\sigma}} \log x \, dx \right| \\
&\quad \leq \frac{8\pi v a \log a}{a^\sigma (4\pi^2 v^2 a^2 - t^2)},
\end{align*}
where
\[
\frac{1}{2\pi v a - t} + \frac{1}{2\pi v a + t} = \frac{4\pi v a}{4\pi^2 v^2 a^2 - t^2}.
\]
For \eqref{2.2c} and \eqref{2.2d}, using product-to-sum identities
\[
\cos(t \log x) \sin(2\pi v x) = \frac{1}{2} \left( \sin(t \log x + 2\pi v x) - \sin(t \log x - 2\pi v x) \right),
\]
\[
\sin(t \log x) \sin(2\pi v x) = \frac{1}{2} \left( -\cos(t \log x + 2\pi v x) + \cos(t \log x - 2\pi v x) \right),
\]
and combining with \eqref{2.2b}, we obtain
\[
\left| \int_a^b \frac{\sin(t \log x) \sin(2\pi v x)}{x^{1+\sigma}} \log x \, dx \right| \leq \frac{8\pi v a \log a}{a^\sigma (4\pi^2 v^2 a^2 - t^2)},
\]
and
\[
\left| \int_a^b \frac{\cos(t \log x) \sin(2\pi v x)}{x^{1+\sigma}} \log x \, dx \right| \leq \frac{8\pi v a \log a}{a^\sigma (4\pi^2 v^2 a^2 - t^2)}.
\]
This completes the proof of Lemma \ref{lemma2.2}.
\end{proof}

Taking $v=0$ in \eqref{30},
\[
\zeta(s) = \sum_{n=1}^N \frac{1}{n^s} - s \int_N^{\infty} \frac{u - [u]}{u^{s+1}} du + \frac{1}{(s-1)N^{s-1}}.
\]
Differentiating,
\begin{equation}\label{3}
\begin{aligned}
\zeta'(s) &= -\sum_{n=1}^N \frac{\log n}{n^s} - \int_N^{\infty} \frac{u - [u]}{u^{s+1}} du + s \int_N^{\infty} \frac{u - [u]}{u^{s+1}} \log u \, du \\
&\quad - \frac{1}{(s-1)^2 N^{s-1}} - \frac{\log N}{(s-1)N^{s-1}} \\
&= -\sum_{n=1}^N \frac{\log n}{n^s} + A + B + C + D,
\end{aligned}
\end{equation}
where
\begin{align*}
A &:= -\int_N^{\infty} \frac{u - [u]}{u^{s+1}} du, \\
B &:= s \int_N^{\infty} \frac{u - [u]}{u^{s+1}} \log u \, du, \\
C &:= -\frac{1}{(s-1)^2 N^{s-1}}, \\
D &:= -\frac{\log N}{(s-1)N^{s-1}}.
\end{align*}
Let $E(s) := A + B + C + D$.

\begin{lemma}\label{lemma2.3}
If $\sigma = 1/2$, $N = [t^2]$, then
\[
|E(s)| \leq |A| + |B| + |C| + |D| \leq \frac{2}{\sqrt{t^2 - 1}} + \sqrt{\frac{4t^2 + 1}{t^2 - 1}} (2 \log t + 2) + \frac{1}{t} + 2 \log t.
\]
\end{lemma}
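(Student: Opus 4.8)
The plan is to bound the four pieces $A$, $B$, $C$, $D$ of $E(s)$ one at a time and then add them by the triangle inequality. Everything rests on two crude consequences of $N=[t^2]$, namely $t^2-1<N\le t^2$, together with the identities $|s|=\tfrac12\sqrt{4t^2+1}$ and $|s-1|=\sqrt{t^2+\tfrac14}\ge t$ valid for $\sigma=\tfrac12$, and the observation $|N^{s-1}|=N^{\sigma-1}=N^{-1/2}$.

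For $A$, since $0\le u-[u]<1$ and $\sigma=\tfrac12$, integrating directly gives
\[
|A|\le\int_N^\infty\frac{du}{u^{3/2}}=\frac{2}{\sqrt N}<\frac{2}{\sqrt{t^2-1}}.
\]
For $B$, the same bound on $u-[u]$ gives $|B|\le|s|\int_N^\infty u^{-3/2}\log u\,du$, and an integration by parts (using the antiderivative $-2u^{-1/2}\log u-4u^{-1/2}$) yields $\int_N^\infty u^{-3/2}\log u\,du=2N^{-1/2}(\log N+2)$. Plugging in $|s|=\tfrac12\sqrt{4t^2+1}$, then using $N>t^2-1$ in the denominator and $\log N\le\log t^2=2\log t$, produces
\[
|B|\le\sqrt{4t^2+1}\,\frac{\log N+2}{\sqrt N}\le\sqrt{\frac{4t^2+1}{t^2-1}}\,(2\log t+2).
\]
For $C$ and $D$ we have $|C|=N^{1/2}/|s-1|^2$ and $|D|=N^{1/2}\log N/|s-1|$; using $\sqrt N\le t$, $|s-1|^2=t^2+\tfrac14$, $|s-1|\ge t$, and $\log N\le2\log t$ gives $|C|\le t/(t^2+\tfrac14)\le 1/t$ and $|D|\le 2t\log t/t=2\log t$. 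Summing the four estimates gives the claimed bound.

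The argument is entirely elementary, so there is no serious obstacle; the only point requiring care is to pick, for each of the four terms, the appropriate one of the two estimates $N>t^2-1$ and $N\le t^2$ so that the constants come out exactly as stated, and to evaluate the integral in the bound for $B$ correctly. (It is also worth noting implicitly that $t>1$ is needed for $\sqrt{t^2-1}$ to be meaningful, which is guaranteed under the hypotheses $t\ge e^2$ or $t\ge e^6$ of the theorems where this lemma is applied.)
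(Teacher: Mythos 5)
Your proposal is correct and follows essentially the same route as the paper: bound each of $A$, $B$, $C$, $D$ by the triangle inequality using $0\le u-[u]<1$, evaluate the elementary integrals, and then apply $t^2-1<N\le t^2$, $|s|=\tfrac12\sqrt{4t^2+1}$, and $|s-1|\ge t$ term by term. The paper carries out the integral estimates for general $\sigma>0$ before specializing to $\sigma=\tfrac12$, whereas you work with $\sigma=\tfrac12$ from the start, but the computations and final bounds are identical.
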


\begin{proof}
For $\sigma > 0$, $s = \sigma + it$,
\[
|A| = \left| \int_N^{\infty} \frac{u - [u]}{u^{s+1}} du \right| \leq \int_N^{\infty} \frac{1}{u^{\sigma+1}} du = \frac{1}{\sigma N^\sigma},
\]
\[
|B| = \left| s \int_N^{\infty} \frac{u - [u]}{u^{s+1}} \log u \, du \right| \leq |s| \int_N^{\infty} \frac{\log u}{u^{\sigma+1}} du \leq \sqrt{(t/\sigma)^2 + 1} \frac{(\sigma \log N + 1)}{\sigma N^\sigma},
\]
\[
|C| = \left| \frac{1}{(s-1)^2 N^{s-1}} \right| \leq \frac{N^{1-\sigma}}{t^2},
\]
\[
|D| = \left| \frac{\log N}{(s-1)N^{s-1}} \right| \leq \frac{N^{1-\sigma} \log N}{t}.
\]
Since $\sigma = 1/2$, $t^2 - 1 < N = [t^2] \leq t^2$, we have
\begin{equation}\label{4}
\begin{aligned}
|E(s)| &\leq \frac{1}{\sigma N^\sigma} + \sqrt{(t/\sigma)^2 + 1} \frac{(\sigma \log N + 1)}{\sigma N^\sigma} + \frac{N^{1-\sigma}}{t^2} + \frac{N^{1-\sigma} \log N}{t} \\
&\leq \frac{2}{\sqrt{t^2 - 1}} + \sqrt{\frac{4t^2 + 1}{t^2 - 1}} (2 \log t + 2) + \frac{1}{t} + 2 \log t.
\end{aligned}
\end{equation}
This completes the proof of Lemma \ref{lemma2.3}.
\end{proof}

From \eqref{3},
\begin{equation}\label{5}
\begin{aligned}
\zeta'(s) &= -\sum_{n=1}^N \frac{\log n}{n^s} + E(s) \\
&= -\sum_{n=1}^{[t]} \frac{\log n}{n^s} - \sum_{t < n \leq t^2} \frac{\log n}{n^s} + E(s).
\end{aligned}
\end{equation}
We now estimate the second sum on the right-hand side using integration.

\begin{lemma}\label{lemma2.4}
If $\sigma = 1/2$, $t \geq e^2$, then
\[
\left| \sum_{t < n \leq t^2} \frac{\log n}{n^s} \right| \leq 2 \log t + 1.944.
\]
\end{lemma}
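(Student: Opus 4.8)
The plan is to compare the sum with the corresponding integral via Euler--Maclaurin summation. Write $g(x)=\frac{\log x}{x^{s}}$ and $\rho(x)=\{x\}-\tfrac12$, so that $|\rho|\le\tfrac12$; then one has the exact identity
\[
\sum_{t<n\le t^{2}}\frac{\log n}{n^{s}}=\int_{t}^{t^{2}}g(x)\,dx+\int_{t}^{t^{2}}\rho(x)g'(x)\,dx-\rho(t^{2})g(t^{2})+\rho(t)g(t),
\]
and I would estimate the three contributions in turn. The boundary terms are harmless: $|g(t)|=\frac{\log t}{\sqrt t}$ and $|g(t^{2})|=\frac{2\log t}{t}$, and since both $\frac{\log t}{\sqrt t}$ and $\frac{\log t}{t}$ are decreasing for $t\ge e^{2}$, these contribute at most $\frac1e+\frac2{e^{2}}$.

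For the main integral I would integrate by parts twice, using $x^{-s}=(1-s)^{-1}\frac{d}{dx}x^{1-s}$, to obtain
\[
\int_{t}^{t^{2}}\frac{\log x}{x^{s}}\,dx=\frac{1}{1-s}\bigl[\log x\cdot x^{1-s}\bigr]_{t}^{t^{2}}-\frac{1}{(1-s)^{2}}\bigl[x^{1-s}\bigr]_{t}^{t^{2}}.
\]
Since $|x^{1-s}|=x^{1/2}$, $|1-s|=\sqrt{\tfrac14+t^{2}}>t$ and $\frac{2t}{\sqrt{1/4+t^{2}}}<2$, the right-hand side is at most $2\log t+\frac{\log t}{\sqrt t}+\frac1t+\frac1{t^{3/2}}$. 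This is where the hypothesis $t\ge e^{2}$ enters: $\frac{\log t}{\sqrt t}$ attains its maximum $\tfrac2e$ precisely at $t=e^{2}$, so for $t\ge e^{2}$ the main integral is at most $2\log t+\tfrac2e+\tfrac1{e^{2}}+\tfrac1{e^{3}}$.

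The crux is the error integral $\int_{t}^{t^{2}}\rho(x)g'(x)\,dx$, where $g'(x)=\frac{1-s\log x}{x^{s+1}}$. Because $|1-s|$ is of size $t$, taking absolute values inside is fatal --- it would yield a term of size $\asymp\sqrt t\,\log t$ --- so the oscillation of $x^{-it}=\cos(t\log x)-i\sin(t\log x)$ must be used. I would expand $\rho(x)=-\sum_{k\ge1}\frac{\sin(2\pi kx)}{\pi k}$ and integrate term by term; the dominant part $-s\log x\cdot x^{-s-1}$ then produces, for each $k$, precisely the integrals
\[
\int_{t}^{t^{2}}\frac{\sin(2\pi kx)\cos(t\log x)}{x^{3/2}}\log x\,dx\quad\text{and}\quad\int_{t}^{t^{2}}\frac{\sin(2\pi kx)\sin(t\log x)}{x^{3/2}}\log x\,dx,
\]
to which Lemma~\ref{lemma2.2}, inequalities \eqref{2.2c}--\eqref{2.2d}, apply with $\sigma=\tfrac12$, $v=k$, $a=t$, $b=t^{2}$ (the hypotheses $a>\tfrac{t}{2\pi}$ and $4\pi^{2}v^{2}a^{2}>t^{2}$ hold since $a=t$ and $k\ge1$). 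This bounds the $k$-th integral by $\ll\frac{k\log t}{t^{3/2}(4\pi^{2}k^{2}-1)}$, and summing against the weights $\frac1{\pi k}$ leaves the convergent series $\sum_{k\ge1}\frac{1}{4\pi^{2}k^{2}-1}=\frac{2-\cot(1/2)}{4}\approx 0.0424$ (from the cotangent partial-fraction expansion); crucially the factor $|s|$ of size $t$ is swallowed by the $t^{3/2}$ in the denominator, leaving a contribution of size $\ll\frac{\log t}{\sqrt t}\le\tfrac2e$. The remaining $s$-free part $\int_{t}^{t^{2}}\rho(x)x^{-s-1}\,dx$ is handled the same way (or bounded trivially by $\tfrac12\int_{t}^{\infty}x^{-3/2}\,dx=t^{-1/2}$) and is of smaller order. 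Assembling the three pieces and inserting the explicit values at $t=e^{2}$ then gives the stated bound $2\log t+1.944$. The main obstacle is exactly this error integral: the loss $|1-s|\asymp t$ in $g'$ must be recovered from cancellation, which is what forces the passage through Lemma~\ref{lemma2.2}; after that it is a matter of bookkeeping the constants carefully enough that the additive term comes out below $1.944$.
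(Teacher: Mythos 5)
Your proposal follows the paper's proof essentially step for step: Euler summation splits the sum into the main integral $I_1$, two boundary terms $I_2,I_3$, and the error integral $I_4+I_5$; the main integral is handled by two integrations by parts (giving $2\log t + \tfrac2e+\tfrac1{e^2}+\tfrac1{e^3}$); and the error integral is attacked by expanding $\{x\}-\tfrac12$ in its Fourier sine series so that Lemma~\ref{lemma2.2} applies term by term, with the factor $|s|\asymp t$ absorbed by the $t^{3/2}$ in the denominator. One caution: your parenthetical fallback of bounding the $s$-free piece $\int_t^{t^2}\rho(x)x^{-s-1}\,dx$ trivially by $\tfrac12\int_t^\infty x^{-3/2}\,dx=t^{-1/2}$ is \emph{not} ``of smaller order'' for the purposes of the constant --- at $t=e^2$ it equals $1/e\approx 0.368$, and adding it to the contributions $\approx 0.921$, $\approx 0.639$, and $\approx 0.358$ from $I_1$, $I_2+I_3$, and $I_5$ would push the total well above $1.944$; the Fourier treatment you mention as the primary route (the paper's $I_4\le 0.018\sqrt2\approx 0.025$) is in fact required for that piece as well.
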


\begin{proof}
By the Euler summation formula,
\[
\sum_{a < n \leq b} \Phi(n) = \int_a^b \Phi(x) dx + \int_a^b \left(x - [x] - \frac{1}{2}\right) \Phi'(x) dx + \left(a - [a] - \frac{1}{2}\right) \Phi(a) - \left(b - [b] - \frac{1}{2}\right) \Phi(b),
\]
where $((x)) = x - [x] - \frac{1}{2}$ and $|((x))| \leq \frac{1}{2}$. Then
\begin{equation}\label{6}
\begin{aligned}
\sum_{t < n \leq t^2} \frac{\log n}{n^s} &= \int_t^{t^2} \frac{\log x}{x^s} dx + \frac{((t)) \log t}{t^s} - \frac{((t^2)) \log t^2}{t^{2s}} \\
&\quad + \int_t^{t^2} \frac{((x))}{x^{s+1}} dx - s \int_t^{t^2} \frac{((x)) \log x}{x^{s+1}} dx \\
&= I_1 + I_2 + I_3 + I_4 + I_5,
\end{aligned}
\end{equation}
where
\begin{align*}
I_1 &:= \int_t^{t^2} \frac{\log x}{x^s} dx, \\
I_2 &:= \frac{((t)) \log t}{t^s}, \\
I_3 &:= -\frac{((t^2)) \log t^2}{t^{2s}}, \\
I_4 &:= \int_t^{t^2} \frac{((x))}{x^{s+1}} dx, \\
I_5 &:= -s \int_t^{t^2} \frac{((x)) \log x}{x^{s+1}} dx.
\end{align*}
For $\sigma = 1/2$, $t \geq e^2$,
\begin{align*}
|I_1| &= \left| \int_t^{t^2} \frac{\log x}{x^s} dx \right| = \left| \frac{x^{-s+1} \log x}{1-s} - \frac{x^{-s+1}}{(1-s)^2} \Big|_t^{t^2} \right| \\
&\leq 2 \log t \sqrt{\frac{t^2}{1/4 + t^2}} + \frac{t}{1/4 + t^2} + \frac{t^{1/2} \log t}{\sqrt{1/4 + t^2}} + \frac{t^{1/2}}{1/4 + t^2} \\
&\leq 2 \log t + \frac{1}{t} + \frac{\log t}{t^{1/2}} + \frac{1}{t^{3/2}} \leq 2 \log t + 0.921,
\end{align*}
\[
|I_2| + |I_3| = \left| \frac{((t)) \log t}{t^s} \right| + \left| \frac{((t^2)) \log t^2}{t^{2s}} \right| \leq \frac{1}{2} \left( \frac{\log t}{t^{1/2}} + \frac{2 \log t}{t} \right) \leq 0.639,
\]
\begin{equation}\label{40}
|I_4| = \left| \int_t^{t^2} \frac{((x))}{x^{s+1}} dx \right| = \left| \int_t^{t^2} \frac{((x)) \cos(t \log x)}{x^{\sigma+1}} dx - i \int_t^{t^2} \frac{((x)) \sin(t \log x)}{x^{\sigma+1}} dx \right|.
\end{equation}
By \cite{5,7},
\[
((x)) = -\frac{1}{\pi} \sum_{v=1}^{\infty} \frac{\sin 2\pi v x}{v}.
\]
According to \cite{6},
\[
\left| \int_a^b \frac{\cos(t \log x) \sin(2\pi v x)}{x^{1+\sigma}} dx \right| \leq \frac{8\pi v a}{a^\sigma (4\pi^2 v^2 a^2 - t^2)},
\]
\[
\left| \int_a^b \frac{\sin(t \log x) \sin(2\pi v x)}{x^{1+\sigma}} dx \right| \leq \frac{8\pi v a}{a^\sigma (4\pi^2 v^2 a^2 - t^2)}.
\]
Thus,
\begin{align*}
\left| \int_t^{t^2} \frac{((x)) \cos(t \log x)}{x^{\sigma+1}} dx \right| &\leq \sum_{v=1}^{\infty} \frac{1}{\pi v} \left| \int_t^{t^2} \frac{\sin(2\pi v x) \cos(t \log x)}{x^{\sigma+1}} dx \right| \\
&\leq \sum_{v=1}^{\infty} \frac{8}{t^{3/2} (4\pi^2 v^2 - 1)} \leq \frac{8}{t^{3/2}} \cdot \frac{1}{3.898\pi^2} \sum_{v=1}^{\infty} \frac{1}{v^2} \leq 0.018,
\end{align*}
and
\begin{align*}
\left| \int_t^{t^2} \frac{((x)) \sin(t \log x)}{x^{\sigma+1}} dx \right| &\leq \sum_{v=1}^{\infty} \frac{1}{\pi v} \left| \int_t^{t^2} \frac{\sin(2\pi v x) \sin(t \log x)}{x^{\sigma+1}} dx \right| \\
&\leq \sum_{v=1}^{\infty} \frac{8}{t^{3/2} (4\pi^2 v^2 - 1)} \leq 0.018.
\end{align*}
Therefore,
\[
|I_4| \leq 0.018 \sqrt{2}.
\]
Now,
\begin{equation}\label{41}
\begin{aligned}
|I_5| &= \left| s \int_t^{t^2} \frac{((x)) \log x}{x^{s+1}} dx \right| \\
&= \sqrt{1/4 + t^2} \left| \int_t^{t^2} \frac{((x)) \cos(t \log x) \log x}{x^{\sigma+1}} dx - i \int_t^{t^2} \frac{((x)) \sin(t \log x) \log x}{x^{\sigma+1}} dx \right|.
\end{aligned}
\end{equation}
By Lemma \ref{lemma2.2},
\[
\left| \int_a^b \frac{\cos(t \log x) \sin(2\pi v x) \log x}{x^{1+\sigma}} dx \right| \leq \frac{8\pi v a \log a}{a^\sigma (4\pi^2 v^2 a^2 - t^2)},
\]
\[
\left| \int_a^b \frac{\sin(t \log x) \sin(2\pi v x) \log x}{x^{1+\sigma}} dx \right| \leq \frac{8\pi v a \log a}{a^\sigma (4\pi^2 v^2 a^2 - t^2)}.
\]
So,
\begin{align*}
&\left| \sqrt{1/4 + t^2} \int_t^{t^2} \frac{((x)) \cos(t \log x) \log x}{x^{\sigma+1}} dx \right| \\
&\quad \leq \sqrt{1/4 + t^2} \sum_{v=1}^{\infty} \frac{1}{\pi v} \left| \int_t^{t^2} \frac{\sin(2\pi v x) \cos(t \log x) \log x}{x^{\sigma+1}} dx \right| \\
&\quad \leq 8 \sqrt{1/4 + t^2} \sum_{v=1}^{\infty} \frac{t^{1/2} \log t}{4\pi^2 v^2 t^2 - t^2} \leq 8 \sqrt{1 + \frac{1}{4t^2}} \sum_{v=1}^{\infty} \frac{\log t}{t^{1/2}} \frac{1}{4\pi^2 v^2 - 1} \leq 0.253,
\end{align*}
and similarly,
\[
\left| \sqrt{1/4 + t^2} \int_t^{t^2} \frac{((x)) \sin(t \log x) \log x}{x^{\sigma+1}} dx \right| \leq 0.253.
\]
Hence,
\[
|I_5| \leq 0.253 \sqrt{2}.
\]
Combining the estimates,
\begin{equation}\label{7}
\begin{aligned}
\left| \sum_{t < n \leq t^2} \frac{\log n}{n^s} \right| &\leq |I_1| + |I_2| + |I_3| + |I_4| + |I_5| \\
&\leq 2 \log t + 1.944.
\end{aligned}
\end{equation}
This completes the proof of Lemma \ref{lemma2.4}.
\end{proof}

\begin{lemma}\label{lemma2.5}
If $0 < a_1 \leq a_2 \leq a_3 \leq \cdots \leq a_n$, $x_i \in \mathbb{R}$, then
\[
\left| a_1 e^{i x_1} + a_2 e^{i x_2} + \cdots + a_n e^{i x_n} \right| \leq a_n \max\{1, b_2, b_3, \dots, b_n\},
\]
where
\begin{align*}
b_2 &= \max_{1 \leq i < j \leq n} \left| e^{i x_i} + e^{i x_j} \right|, \\
b_3 &= \max_{1 \leq i < j < k \leq n} \left| e^{i x_i} + e^{i x_j} + e^{i x_k} \right|, \\
&\vdots \\
b_n &= \left| e^{i x_1} + e^{i x_2} + \cdots + e^{i x_n} \right|.
\end{align*}
\end{lemma}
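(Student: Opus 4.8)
The plan is to prove this by partial summation (Abel summation), grouping the exponentials from the top index downward so that the monotonicity of the $a_j$ makes every partial-summation weight nonnegative. Write $M := \max\{1, b_2, \dots, b_n\}$ and, for $1 \le k \le n$, set $T_k := e^{i x_k} + e^{i x_{k+1}} + \cdots + e^{i x_n}$, with the convention $T_{n+1} := 0$. Then $e^{i x_k} = T_k - T_{k+1}$ for each $k$, and substituting this into the sum and collecting terms gives the telescoped identity
\[
a_1 e^{i x_1} + \cdots + a_n e^{i x_n} = a_1 T_1 + \sum_{k=2}^{n} (a_k - a_{k-1}) T_k .
\]

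Next I would bound each $T_k$. Since $T_k$ is a sum of the $n - k + 1$ unit-modulus numbers $e^{i x_k}, \dots, e^{i x_n}$, we have $|T_n| = 1$ and, for $1 \le k \le n-1$, $|T_k| \le b_{n-k+1}$ directly from the definition of $b_{n-k+1}$ as the maximum of $|e^{i x_{i_1}} + \cdots + e^{i x_{i_{n-k+1}}}|$ over all $(n-k+1)$-element index sets; in particular $|T_1| = b_n$. Hence $|T_k| \le M$ for every $k$. Because $a_1 > 0$ and, by hypothesis, $a_k - a_{k-1} \ge 0$ for $2 \le k \le n$, applying the triangle inequality to the identity above yields
\[
\left| a_1 e^{i x_1} + \cdots + a_n e^{i x_n} \right|
\le a_1 |T_1| + \sum_{k=2}^{n} (a_k - a_{k-1}) |T_k|
\le M \Bigl( a_1 + \sum_{k=2}^{n} (a_k - a_{k-1}) \Bigr) .
\]
The inner sum telescopes to $a_n$, so the right-hand side equals $a_n M = a_n \max\{1, b_2, \dots, b_n\}$, which is the assertion.

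There is no genuine obstacle here; the only matters requiring care are organizational. One must form the partial sums $T_k$ starting from the \emph{largest} index $n$ (not from $k=1$ upward), since this is precisely what makes the coefficients $a_k - a_{k-1}$ appearing after summation by parts nonnegative and lets them telescope to $a_n$. One must also check that the degenerate cases are covered: $|T_1| = b_n \le M$ for the full sum, and $|T_n| = 1 \le M$ for the final single term. An induction on $n$, peeling off $a_n e^{i x_n}$ and writing $a_n = (a_n - a_{n-1}) + a_{n-1}$, gives the same conclusion, but the partial-summation argument is shorter and shows the role of each $b_k$ more transparently.
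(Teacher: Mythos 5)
Your proof is correct, but it follows a genuinely different route from the paper's. You use Abel summation from the top index: writing $T_k = e^{i x_k} + \cdots + e^{i x_n}$, the identity $\sum_k a_k e^{i x_k} = a_1 T_1 + \sum_{k=2}^n (a_k - a_{k-1}) T_k$ together with $|T_k| \le \max\{1,b_2,\dots,b_n\}$ and the nonnegativity of the increments $a_k - a_{k-1}$ gives the bound after a telescoping sum. The paper instead normalizes so that each coefficient lies in $[0,1]$, observes that $g = |f|^2 = \sum_i a_i^2 + 2\sum_{i<j} a_i a_j \cos(x_i - x_j)$ is a quadratic function of each $a_i$ with positive leading coefficient (hence convex in each variable separately), and concludes that the maximum over the cube $[0,1]^n$ is attained at a vertex; the vertex values are precisely the quantities $1, b_2, \dots, b_n$. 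Your Abel argument is more elementary and fully explicit, avoiding the appeal to the "convex-in-each-variable implies max at a vertex" principle, which the paper invokes somewhat tersely. On the other hand, the paper's argument uses only the weaker hypothesis $0 < a_i \le a_n$ for all $i$ rather than the full monotone ordering, so in that sense it is slightly more general; your summation-by-parts genuinely needs $a_1 \le \cdots \le a_n$ to make the weights nonnegative. For the lemma as stated, both arguments are complete and of comparable length.
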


\begin{proof}
Consider $f(a_1, \dots, a_n) = \left| a_1 e^{i x_1} + a_2 e^{i x_2} + \cdots + a_n e^{i x_n} \right|$, with $0 \leq a_i \leq 1$. Note that
\begin{align*}
f(0, \dots, 0) &= 0, \\
f(0, \dots, 1) &= 1, \\
f(0, \dots, 1, \dots, 1, \dots, 0) &= |e^{i x_i} + e^{i x_j}|, \\
f(0, \dots, 1, \dots, 1, \dots, 1, \dots, 0) &= |e^{i x_i} + e^{i x_j} + e^{i x_k}|, \\
&\vdots
\end{align*}
Since
\[
g(a_1, \dots, a_n) = |f|^2 = \sum_{i=1}^n a_i^2 + 2 \sum_{i<j} a_i a_j \cos(x_i - x_j),
\]
and $g$ is a quadratic function in each $a_i$, its maximum on $[0,1]^n$ occurs at a vertex. Therefore,
\[
f(a_1, \dots, a_n) \leq \max\{1, b_2, b_3, \dots, b_n\}.
\]
This completes the proof of Lemma \ref{lemma2.5}.
\end{proof}

\section{Proof of Theorem \ref{theorem1.1}}
From \eqref{5}, for $s = 1/2 + it$,
\begin{equation}\label{74}
|\zeta'(1/2+it)| \leq \left| \sum_{n=1}^{[t]} \frac{\log n}{n^s} \right| + \left| \sum_{t < n \leq t^2} \frac{\log n}{n^s} \right| + |E(s)|.
\end{equation}
For the first term,
\begin{equation}\label{8}
\left| \sum_{n=1}^{[t]} \frac{\log n}{n^s} \right| \leq \int_0^t \frac{\log u}{u^{1/2}} du = 2 t^{1/2} (\log t - 2), \quad t \geq e^2.
\end{equation}
For $t \geq e^2$, the third term satisfies
\[
|E(s)| \leq \frac{2}{\sqrt{e^4 - 1}} + \sqrt{\frac{4e^4 + 1}{e^4 - 1}} (2 \log t + 2) + \frac{1}{e^2} + 2 \log t \leq 4.455 + 6.047 \log t.
\]
Combining with Lemma \ref{lemma2.4}, we obtain
\[
|\zeta'(1/2+it)| \leq 2 t^{1/2} \log t - 4 t^{1/2} + 8.047 \log t + 6.399.
\]
This completes the proof of Theorem \ref{theorem1.1}.

\section{Proof of Theorem \ref{theorem1.2}}
We further refine \eqref{5} as
\begin{equation}\label{10}
\begin{aligned}
\zeta'(s) &= -\sum_{n \leq t^{1/3}} \frac{\log n}{n^s} - \sum_{t^{1/3} < n \leq t^{2/3}} \frac{\log n}{n^s} - \sum_{t^{2/3} < n \leq t} \frac{\log n}{n^s} \\
&\quad - \sum_{t < n \leq t^2} \frac{\log n}{n^s} + E(s).
\end{aligned}
\end{equation}
We have
\begin{equation}\label{50}
\left| \sum_{n \leq t^{1/3}} \frac{\log n}{n^s} \right| \leq \int_0^{t^{1/3}} \frac{\log u}{u^{1/2}} du = 2 t^{1/6} \left( \frac{1}{3} \log t - 2 \right), \quad t \geq e^6.
\end{equation}
For $t \geq e^6$,
\begin{equation}\label{51}
|E(s)| \leq \frac{2}{\sqrt{e^{12} - 1}} + \sqrt{\frac{4e^{12} + 1}{e^{12} - 1}} (2 \log t + 2) + \frac{1}{e^6} + 2 \log t \leq 4.008 + 6.001 \log t.
\end{equation}
Next, we use exponential sum methods to estimate $\sum_{t^{1/3} < n \leq t^{2/3}} \frac{\log n}{n^s}$ and $\sum_{t^{2/3} < n \leq t} \frac{\log n}{n^s}$.

\subsection{Estimation of $\sum_{t^{2/3} < n \leq t} \frac{\log n}{n^s}$}
\begin{lemma}\label{lemma4.1} \cite[Lemma 4]{1}
Suppose $f$ is a real-valued function with two continuous derivatives on $[N+1, N+L]$. If there exist real numbers $V < W$, $W > 1$ such that
\begin{equation}\label{89}
\frac{1}{W} \leq |f''(x)| \leq \frac{1}{V} \quad \text{for } x \in [N+1, N+L],
\end{equation}
then
\[
\left| \sum_{n=N+1}^{N+L} e^{2\pi i f(n)} \right| \leq \frac{1}{5} \left( \frac{L}{V} + 1 \right) (8 W^{1/2} + 15).
\]
\end{lemma}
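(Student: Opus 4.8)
The plan is to prove this by the classical van der Corput second derivative test, carried out with explicit constants. First I would reduce to the case $f'' > 0$ throughout $[N+1, N+L]$: if $f'' < 0$ one replaces $f$ by $-f$ and conjugates the sum, which changes neither $\left|\sum_{n=N+1}^{N+L} e^{2\pi i f(n)}\right|$ nor the hypothesis \eqref{89}. With $f'' > 0$ the function $f'$ is strictly increasing, and its total variation over $[N+1, N+L]$ is $f'(N+L) - f'(N+1) = \int_{N+1}^{N+L} f''(x)\,dx \le (L-1)/V < L/V$. I would then partition the domain at the points where $f'$ increases through the values $f'(N+1)+1, f'(N+1)+2, \dots$; this produces at most $\lfloor (L-1)/V\rfloor + 1 \le \tfrac{L}{V}+1$ blocks, each one carrying $f'$ over an interval of length at most $1$, and this is exactly the factor $\tfrac{L}{V}+1$ in the statement. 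On each block, choosing an integer $m$ with $f'-m$ small and replacing $f(x)$ by $f(x) - mx$ (legitimate since $e^{2\pi i m n} = 1$ for integer $n$), I may assume the phase $F$ satisfies $F' \in [-1,1]$, $F'$ strictly increasing, and $1/W \le F'' \le 1/V$.

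For a single normalized block $J$, I would fix a parameter $\lambda \in (0, \tfrac12]$ and split $J$ into a stationary piece where $\|F'\| \le \lambda$ (i.e. $F'$ is within $\lambda$ of an integer) and non-stationary pieces where $\|F'\| \ge \lambda$. Since $F'$ is monotone with $F'' \ge 1/W$, the stationary piece has length at most $2\lambda W$, hence contains at most $2\lambda W + 1$ integers and is bounded trivially; on each non-stationary piece $F'$ is monotone with $\|F'\|$ bounded below by $\lambda$, so the Kuzmin--Landau first derivative test gives a contribution at most $\cot(\pi\lambda/2) \le \tfrac{2}{\pi\lambda}$. Collecting the boundedly many pieces, a single block contributes at most an expression of the shape $c_1\lambda W + c_2 + c_3/\lambda$.

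Finally I would optimize by taking $\lambda$ of order $W^{-1/2}$, which converts the per-block bound into $c\,W^{1/2} + c'$; multiplying by the number $\le \tfrac{L}{V}+1$ of blocks yields the asserted estimate, and tuning the constants (the exact constant in the Kuzmin--Landau inequality, the treatment of the two boundary unit intervals, and the optimization in $\lambda$) produces the precise coefficients $8$ and $15$. The main obstacle is purely quantitative: to reach these coefficients one likely cannot afford the crude $\lambda W$ bound at the stationary point and must instead compare $\sum_{n\in J} e^{2\pi i F(n)}$ with $\int_J e^{2\pi i F(x)}\,dx$ and invoke the second derivative test for integrals (which gives $O(W^{1/2})$ and is sharper), and then chase all the numerical constants through the optimization and the block count; the structural part of the argument is routine van der Corput theory.
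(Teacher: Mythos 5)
The paper does not prove this lemma: it is cited verbatim from Cheng--Graham \cite[Lemma 4]{1}, so there is no in-paper argument to compare against; I will therefore assess your sketch on its own terms.

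Your structural skeleton is the right one for a result of this shape: reduce to $f''>0$, cut the summation range at the points where $f'$ crosses successive integers so that the number of blocks is $\le L/V+1$, normalize each block so the derivative lies in a unit interval, and prove a per-block bound of the form $cW^{1/2}+c'$. That matches the factorization $\tfrac15(L/V+1)(8W^{1/2}+15)$. The difficulty is exactly where you flag it, but it is more than a matter of ``tuning'': the route you actually carry out --- Kuzmin--Landau ($\cot(\pi\lambda/2)\le 2/(\pi\lambda)$) on the non-stationary arcs plus a trivial count $\le 2\lambda W+1$ on the stationary arc --- optimizes, at $\lambda\asymp W^{-1/2}$, to roughly $2\sqrt{2W/\pi}+4/\sqrt{2\pi}\cdot W^{1/2}+1\approx 3.19\,W^{1/2}+1$ per block, which for $W$ of moderate size and larger is about twice the target $\tfrac85W^{1/2}+3$. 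So the pair of constants $(8,15)$ is not reachable along the path your proof actually traverses; the stated bound would simply fail to come out of it. (There is also a small slip in the normalization: after subtracting $mx$ the derivative should land in an interval of length one, e.g.\ $[-\tfrac12,\tfrac12]$, not $[-1,1]$; otherwise there are up to three stationary arcs, not one, and the trivial count doubles.)

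You correctly name the repair --- compare $\sum_{n\in J}e^{2\pi iF(n)}$ with $\int_J e^{2\pi iF}\,dx$ (Poisson summation restricted to $m\in\{-1,0,1\}$, or a first-order Euler--Maclaurin step) and then invoke the explicit second-derivative test for the integral --- and this is indeed the mechanism by which Cheng--Graham-type proofs get the sharper leading coefficient, since the integral bound $O(W^{1/2})$ replaces the trivial $\lambda W$ count and removes the need to balance two competing error terms at $\lambda\asymp W^{-1/2}$. But as written that step is a pointer, not an argument: you do not state the explicit integral-comparison identity you would use, the explicit constant in the second-derivative test for integrals, or how the resulting boundary and remainder terms aggregate across the $\le L/V+1$ blocks to produce exactly $\tfrac15(8W^{1/2}+15)$. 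Until that bookkeeping is actually done, the proposal is a correct plan with a self-identified gap rather than a proof of the stated inequality with the stated constants.
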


\begin{lemma}\label{lemma4.2}
Let $t \geq e^3$. Then
\[
\left| \sum_{t^{2/3} < n \leq t} \frac{\log n}{n^s} \right| \leq C_1 t^{1/6} \log t + C_2 t^{1/6} + C_3 \log t +\]
\[ C_4 + C_5 t^{-1/6} + C_6 t^{-1/2} + C_7 t^{-1/3} + C_8 t^{-2/3} + C_9 t^{-2/3} \log t + C_{10} t^{-1/2} \log t + C_{11} t^{-1/3} \log t.
\]
Where $C_i(k)$ ($i=1,\dots,11$) are constants, and $k$ is a parameter to be determined.
\end{lemma}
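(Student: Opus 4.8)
The plan is to treat $\sum_{t^{2/3}<n\le t}\frac{\log n}{n^{s}}$ with $s=\tfrac12+it$ as a weighted exponential sum. Write $n^{-s}=n^{-1/2}e^{2\pi i f(n)}$ with $f(x)=-\frac{t}{2\pi}\log x$, so that $f''(x)=\frac{t}{2\pi x^{2}}>0$; I would estimate the sum by applying Lemma~\ref{lemma4.1} (van der Corput's second–derivative bound) on short blocks and then removing the slowly varying weight $w(x)=\frac{\log x}{x^{1/2}}$ by partial summation. The point that makes the last step effective is that, since $t^{2/3}\ge e^{2}$ when $t\ge e^{3}$, the weight $w$ is positive and decreasing throughout $(t^{2/3},t]$.

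First I would split $(t^{2/3},t]$ into consecutive blocks $(M_{j-1},M_{j}]$, $j=1,\dots,J$, with $M_{0}=t^{2/3}$, $M_{j}=\min(kM_{j-1},t)$ for a fixed parameter $k>1$, and $M_{J}=t$, so that $J\le\big\lceil\frac{\log t}{3\log k}\big\rceil$ (one may take $k$ bounded away from $1$, or absorb the at most $O(1)$ singleton blocks trivially). On the block $(M_{j-1},M_{j}]$ the relevant integers are $N_{j}+1,\dots,N_{j}+L_{j}$ with $N_{j}=\lfloor M_{j-1}\rfloor$ and $N_{j}+L_{j}=\lfloor M_{j}\rfloor$; since $f''$ is decreasing, on $[N_{j}+1,N_{j}+\ell]$ for every $\ell\le L_{j}$ one has $\frac1{W_{j}}\le|f''|\le\frac1{V_{j}}$ with $V_{j}=\frac{2\pi(N_{j}+1)^{2}}{t}$ and $W_{j}=\frac{2\pi(N_{j}+L_{j})^{2}}{t}$. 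For $t\ge e^{3}$ one checks $W_{j}>1$ (this reduces to $\lfloor M_{j}\rfloor^{2}>t/(2\pi)$, which follows from $M_{j}>t^{2/3}$) and $V_{j}<W_{j}$, so Lemma~\ref{lemma4.1} applies uniformly in $\ell\le L_{j}$ and gives
\[
\Big|\sum_{n=N_{j}+1}^{N_{j}+\ell}e^{2\pi i f(n)}\Big|\le\Phi_{j}:=\frac15\Big(\frac{L_{j}}{V_{j}}+1\Big)\big(8W_{j}^{1/2}+15\big).
\]
Partial (Abel) summation on each block with the decreasing weight $w$ then yields $\big|\sum_{M_{j-1}<n\le M_{j}}\frac{\log n}{n^{1/2}}e^{2\pi i f(n)}\big|\le w(N_{j}+1)\,\Phi_{j}\le\frac{\log M_{j-1}}{M_{j-1}^{1/2}}\Phi_{j}$.

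Next I would make $\Phi_{j}$ explicit. From $L_{j}\le(k-1)M_{j-1}+1$, $V_{j}\ge\frac{2\pi M_{j-1}^{2}}{t}$ and $W_{j}\le\frac{2\pi k^{2}M_{j-1}^{2}}{t}$ (the last inequality valid for the final block as well, since $kM_{J-1}\ge t=M_{J}$),
\[
\Phi_{j}\le\frac15\Big(\frac{(k-1)t}{2\pi M_{j-1}}+\frac{t}{2\pi M_{j-1}^{2}}+1\Big)\Big(\frac{8k\sqrt{2\pi}\,M_{j-1}}{t^{1/2}}+15\Big).
\]
Summing $\frac{\log M_{j-1}}{M_{j-1}^{1/2}}\Phi_{j}$ over $j$: expand the product, substitute $M_{j-1}=k^{j-1}t^{2/3}$ and $\log M_{j-1}=(j-1)\log k+\tfrac23\log t$, and evaluate the resulting geometric series $\sum\rho^{j}$ and $\sum j\rho^{j}$ (all of ratio $<1$, except the $M_{j-1}^{1/2}$–term, whose increasing geometric sum is still $O(t^{1/2})$). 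Collecting the outcome by power of $t$ and by power of $\log t$ produces precisely the eleven terms $C_{1}t^{1/6}\log t+C_{2}t^{1/6}+C_{3}\log t+C_{4}+C_{5}t^{-1/6}+C_{6}t^{-1/2}+C_{7}t^{-1/3}+C_{8}t^{-2/3}+C_{9}t^{-2/3}\log t+C_{10}t^{-1/2}\log t+C_{11}t^{-1/3}\log t$, each $C_{i}$ being an explicit elementary expression in $k$ (built from $\frac{\sqrt k}{\sqrt k-1}$, $\frac{k^{3/2}}{k^{3/2}-1}$, $\frac{\sqrt k\log k}{(\sqrt k-1)^{2}}$, and similar quantities).

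The crux — and essentially the only point one must get right — is the choice of block width. Keeping $W_{j}/V_{j}\le k^{2}$ bounded forces the factor $\big(\frac{L_{j}}{V_{j}}+1\big)W_{j}^{1/2}$ to stay of size $O(t^{1/2})$ on every block, and then, because the decreasing weight already contributes $w(M_{j-1})\asymp t^{-1/3}\log t$ on the first block and decays geometrically thereafter, the $j$–sum of the leading contributions is $O(t^{1/6}\log t)$ rather than the $O(t^{1/6}(\log t)^{2})$ one would obtain from a single global partial summation. Everything else is routine bookkeeping: the integer–part truncations $\lfloor M_{j-1}\rfloor,\lfloor M_{j}\rfloor$ are the source of the terms carrying negative powers of $t$, and the hypothesis $W_{j}>1$ of Lemma~\ref{lemma4.1} is verified for every block as soon as $t\ge e^{3}$.
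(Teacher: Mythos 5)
Your proposal is correct and follows the same route as the paper: split $(t^{2/3},t]$ into geometric blocks $X_j=k^jt^{2/3}$, apply Lemma~\ref{lemma4.1} on each block with $V=2\pi X_{j-1}^2/t$, $W=2\pi k^2X_{j-1}^2/t$, $L\le(k-1)X_{j-1}+1$, peel off the decreasing weight $\frac{\log n}{n^{1/2}}$, and sum the resulting geometric-type series in $j$ to collect the eleven terms. The one genuine technical deviation is how the weight is removed: you use classical Abel summation, which bounds $\bigl|\sum w(n)e^{2\pi i f(n)}\bigr|$ by $w(\text{left endpoint})$ times the maximum of \emph{prefix} (hence interval) sums, so that Lemma~\ref{lemma4.1} applies directly; the paper instead invokes its Lemma~\ref{lemma2.5}, whose bound involves $\max\{1,b_2,\dots,b_L\}$ with $b_k$ the maximum over arbitrary $k$-element \emph{subsets}, and then applies Lemma~\ref{lemma4.1} to that maximum even though Lemma~\ref{lemma4.1} only controls consecutive sums. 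Your variant is therefore a cleaner justification of the same estimate. The other small difference is that the paper explicitly bounds the range $e^3\le t\le t_1$ by the trivial integral (which is why, as in Lemma~\ref{lemma4.7} with $t_2$, the constants really depend on an auxiliary cutoff $t_1$ as well as $k$, consistent with Theorem~\ref{theorem1.2}'s parameter list), whereas you verify that the hypotheses of Lemma~\ref{lemma4.1}, in particular $W_j>1$, already hold for every block when $t\ge e^3$ and dispense with the cutoff; both readings are legitimate.
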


\begin{proof}
For $e^3 \leq t \leq t_1$ with $t_1$ sufficiently large,
\begin{equation}\label{100}
\left| \sum_{t^{2/3} < n \leq t} \frac{\log n}{n^s} \right| \leq \sum_{t^{2/3} < n \leq t} \frac{\log n}{n^{1/2}} \leq \int_1^t \frac{\log u}{u^{1/2}} du \leq 2 t_1^{1/2} (\log t_1 - 2) + 4.
\end{equation}
For $t \geq t_1$, let $k > 1$, $X_j = k^j t^{2/3}$, $N_j = [X_j]$ ($j=0,1,\dots,J$), with $X_j \leq t$, so that $J \leq \left[ \frac{\log t}{3 \log k} \right] + 1$. Then
\begin{equation}\label{12}
\left| \sum_{t^{2/3} < n \leq t} \frac{\log n}{n^s} \right| \leq \sum_{j=1}^J \left| \sum_{n=N_{j-1}+1}^{N_j} \frac{\log n}{n^s} \right|.
\end{equation}
Using Lemma \ref{lemma2.5} to estimate the inner sum,
\begin{equation}\label{13}
\left| \sum_{n=N_{j-1}+1}^{N_j} \frac{\log n}{n^s} \right| \leq \frac{\log(N_{j-1}+1)}{(N_{j-1}+1)^{1/2}} \max\{1, b_2, b_3, \dots, b_L\},
\end{equation}
where $L = N_j - N_{j-1}$.

Let $f(x) = -\frac{t \log x}{2\pi}$. For $X_{j-1} < N_{j-1}+1 \leq x \leq N_j \leq X_j$, we have
\[
\frac{t}{2\pi k^2 X_{j-1}^2} = \frac{t}{2\pi X_j^2} \leq |f''(x)| = \left| \frac{t}{2\pi x^2} \right| < \frac{t}{2\pi X_{j-1}^2}.
\]
Let $W = \frac{2\pi k^2 X_{j-1}^2}{t}$, $V = \frac{2\pi X_{j-1}^2}{t}$, and $L \leq (k-1) X_{j-1} + 1$. By Lemma \ref{lemma4.1},
\begin{equation}\label{14}
\begin{aligned}
\max\{1, b_2, \dots, b_L\} &\leq \frac{1}{5} \left( \frac{(k-1) t}{2\pi X_{j-1}} + \frac{t}{2\pi X_{j-1}^2} + 1 \right) \left( \frac{2^{7/2} \pi^{1/2} k X_{j-1}}{t^{1/2}} + 15 \right) \\
&= \frac{1}{5} \left( \frac{2^{5/2} k (k-1) t^{1/2}}{\pi^{1/2}} + \frac{2^{5/2} k t^{1/2}}{\pi^{1/2} X_{j-1}} + \frac{2^{7/2} \pi^{1/2} k X_{j-1}}{t^{1/2}} \right. \\
&\quad \left. + \frac{15 (k-1) t}{2\pi X_{j-1}} + \frac{15 t}{2\pi X_{j-1}^2} + 15 \right).
\end{aligned}
\end{equation}
Substituting \eqref{14} into \eqref{13} and then into \eqref{12}, we get
\[
\left| \sum_{t^{2/3} < n \leq t} \frac{\log n}{n^s} \right| \leq \frac{1}{5} \sum_{j=1}^J \frac{\log(N_{j-1}+1)}{(N_{j-1}+1)^{1/2}} \left( \frac{2^{5/2} k (k-1) t^{1/2}}{\pi^{1/2}} +\right.\]
\[\left. \frac{2^{5/2} k t^{1/2}}{\pi^{1/2} X_{j-1}} + \frac{2^{7/2} \pi^{1/2} k X_{j-1}}{t^{1/2}} + \frac{15 (k-1) t}{2\pi X_{j-1}} + \frac{15 t}{2\pi X_{j-1}^2} + 15 \right).
\]
Since $X_{j-1} < N_{j-1}+1 < X_j$, this becomes
\begin{equation}\label{15}
\begin{aligned}
\left| \sum_{t^{2/3} < n \leq t} \frac{\log n}{n^s} \right| &\leq \frac{1}{5} \sum_{j=1}^J \left( \frac{2^{5/2} k (k-1) t^{1/2}}{\pi^{1/2}} \frac{\log X_{j-1}}{X_{j-1}^{1/2}} + \frac{2^{5/2} k t^{1/2}}{\pi^{1/2}} \frac{\log X_{j-1}}{X_{j-1}^{3/2}} \right. \\
&\quad + \frac{2^{7/2} \pi^{1/2} k}{t^{1/2}} X_{j-1}^{1/2} \log X_{j-1} + \frac{15 (k-1) t}{2\pi} \frac{\log X_{j-1}}{X_{j-1}^{3/2}} \\
&\quad \left. + \frac{15 t}{2\pi} \frac{\log X_{j-1}}{X_{j-1}^{5/2}} + 15 \frac{\log X_{j-1}}{X_{j-1}^{1/2}} \right).
\end{aligned}
\end{equation}
Let
\begin{align*}
M_1 &:= \sum_{j=1}^J X_{j-1}^{1/2} \log X_{j-1}, \\
M_2(\delta) &:= \sum_{j=1}^J \frac{\log X_{j-1}}{X_{j-1}^{\delta/2}} \quad (\delta = 1, 3, 5).
\end{align*}
Since $J < \frac{\log t}{3 \log k} + 1$, we can bound $M_1$ and $M_2(\delta)$ as in \eqref{16} and \eqref{17}. Substituting these bounds into \eqref{15} yields the desired result.
\end{proof}

\subsection{Estimation of $\sum_{t^{1/3} < n \leq t^{2/3}} \frac{\log n}{n^s}$}
\begin{lemma}\label{lemma4.3} \cite[Lemma 5]{1}
Let $f(n)$ be a real-valued function and $M$ a positive integer. Then
\[
\left| \sum_{n=N+1}^{N+L} e^{2\pi i f(n)} \right|^2 \leq \frac{(L+M)L}{M} + \frac{2(L+M)}{M} \sum_{m=1}^{M-1} \left(1 - \frac{m}{M}\right) \max_{K \leq L} |S'_m(K)|,
\]
where
\[
S'_m(K) = \sum_{n=N+1}^{N+L} e^{2\pi i [f(n+m) - f(n)]}.
\]
\end{lemma}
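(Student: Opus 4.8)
The final statement, Lemma \ref{lemma4.3}, is the Weyl--van der Corput inequality, and the plan is to prove it by the classical differencing argument. Write $S=\sum_{n=N+1}^{N+L}e^{2\pi i f(n)}$ and, to handle the boundary cleanly, set $a_n=e^{2\pi i f(n)}$ for $N+1\leq n\leq N+L$ and $a_n=0$ for all other integers, so that $S=\sum_n a_n$ with $n$ ranging over $\mathbb{Z}$. The first step is the shifting identity: for every integer $r$ we have $\sum_n a_{n+r}=S$, hence $MS=\sum_{r=0}^{M-1}\sum_n a_{n+r}=\sum_n\bigl(\sum_{r=0}^{M-1}a_{n+r}\bigr)$. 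I would then observe that the inner sum $\sum_{r=0}^{M-1}a_{n+r}$ vanishes unless $n$ lies in the interval $[N-M+2,\,N+L]$, which contains at most $L+M-1$ integers; this is the place where the shift has ``smeared'' the original length-$L$ window into a slightly longer one.

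The second step is Cauchy--Schwarz applied to the representation $MS=\sum_n\bigl(\sum_{r=0}^{M-1}a_{n+r}\bigr)$ over the at most $L+M-1$ relevant values of $n$:
\[
M^2|S|^2\leq(L+M-1)\sum_n\Bigl|\sum_{r=0}^{M-1}a_{n+r}\Bigr|^2\leq(L+M)\sum_n\Bigl|\sum_{r=0}^{M-1}a_{n+r}\Bigr|^2.
\]
The third step is to expand the square and sort the resulting terms by the difference of the two shift indices. Writing $\bigl|\sum_r a_{n+r}\bigr|^2=\sum_{r=0}^{M-1}\sum_{s=0}^{M-1}a_{n+r}\overline{a_{n+s}}$ and summing over $n$, I would change variables to $k=n+s$ and $m=r-s$; for each $m$ with $|m|\leq M-1$ there are exactly $M-|m|$ admissible pairs $(r,s)$, so
\[
\sum_n\Bigl|\sum_{r=0}^{M-1}a_{n+r}\Bigr|^2=\sum_{m=-(M-1)}^{M-1}(M-|m|)\sum_k a_{k+m}\overline{a_k}.
\]
The diagonal term $m=0$ contributes $M\sum_k|a_k|^2=ML$. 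For $m\neq 0$ one pairs $m$ with $-m$, using that $\sum_k a_{k-m}\overline{a_k}$ is the complex conjugate of $\sum_k a_{k+m}\overline{a_k}$, so the off-diagonal part equals $2\sum_{m=1}^{M-1}(M-m)\operatorname{Re}\sum_k a_{k+m}\overline{a_k}\leq 2\sum_{m=1}^{M-1}(M-m)\bigl|\sum_k a_{k+m}\overline{a_k}\bigr|$. Since $a_{k+m}\overline{a_k}=e^{2\pi i[f(k+m)-f(k)]}$ precisely when both $k$ and $k+m$ lie in $[N+1,N+L]$, i.e.\ $N+1\leq k\leq N+L-m$, the correlation sum $\sum_k a_{k+m}\overline{a_k}$ is exactly $S'_m(L-m)$ in the notation of the lemma, whose modulus is dominated by $\max_{K\leq L}|S'_m(K)|$.

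Assembling these bounds gives $M^2|S|^2\leq(L+M)\bigl(ML+2\sum_{m=1}^{M-1}(M-m)\max_{K\leq L}|S'_m(K)|\bigr)$, and dividing by $M^2$ together with $M-m=M(1-m/M)$ yields the stated inequality. The main obstacle — really the only thing requiring care — is the index bookkeeping: one must check that the support of $n\mapsto\sum_r a_{n+r}$ has at most $L+M-1$ elements (so that the weak step $L+M-1\leq L+M$ recovers exactly the constant in the statement), and that after the change of variables the inner sum is genuinely a partial sum $S'_m(K)$ over a subinterval of length $K=L-m\leq L$, so that the stated maximum covers it. The degenerate case $M=1$ is harmless: the sum over $m$ is empty and the bound collapses to $|S|^2\leq(L+1)L$, which is immediate from $|S|\leq L$.
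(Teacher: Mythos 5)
Your proof is correct and is the standard Weyl--van der Corput differencing argument: zero-extend the coefficients, average over $M$ shifts, apply Cauchy--Schwarz to the smeared window of length at most $L+M-1$, expand the square, and sort by the difference of shift indices. The paper itself does not prove this lemma --- it is quoted verbatim from \cite[Lemma~5]{1} --- so there is no in-text proof to compare against, but your derivation is exactly the argument underlying that cited result. One point worth flagging: as printed, the paper's definition $S'_m(K)=\sum_{n=N+1}^{N+L}e^{2\pi i[f(n+m)-f(n)]}$ does not actually depend on $K$, which renders the $\max_{K\leq L}$ vacuous; you have (correctly) read the upper limit as $N+K$, and your observation that the correlation sum is precisely $S'_m(L-m)$ with $L-m\leq L$ is the reason the maximum over $K\leq L$ appears at all. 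It would be worth noting this typographical correction explicitly.
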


\begin{lemma}\label{lemma4.4} \cite[p. 1275]{1}
For each $j$,
\begin{equation}\label{21}
\begin{aligned}
|S_j| &\leq \frac{((\tau - 1) X_{j-1} + 1 + q t_2^{-1/3} X_{j-1})^{1/2} ((\tau - 1) X_{j-1} + 1)^{1/2}}{M^{1/2}} \\
&\quad + \frac{2^{1/2} (\tau X_{j-1} + 1)^{1/2}}{M^{1/2}} \left( \sum_{m=1}^{M-1} \left(1 - \frac{m}{M}\right) \max_{K \leq L} |S'_m(K)| \right)^{1/2},
\end{aligned}
\end{equation}
where
\[
S'_m(K) = \sum_{n=N_{j-1}+1}^{N_{j-1}+K} e^{-i t [\log(n+m) - \log n]}.
\]
\end{lemma}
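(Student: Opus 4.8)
The plan is to obtain Lemma~\ref{lemma4.4} as a direct application of the effective Weyl--van der Corput inequality of Lemma~\ref{lemma4.3} to the block exponential sums arising in the treatment of $\sum_{t^{1/3}<n\le t^{2/3}}\frac{\log n}{n^s}$. First I would fix the block decomposition: for a parameter $\tau>1$ set $X_j=\tau^{j}t^{1/3}$, $N_j=[X_j]$ for $j=0,1,\dots,J$ with $X_J\le t^{2/3}$, so that $J\le[\tfrac{\log t}{3\log\tau}]+1$. Applying Lemma~\ref{lemma2.5} to $\sum_{N_{j-1}<n\le N_j}\frac{\log n}{n^s}$ peels off the monotone factor $\frac{\log(N_{j-1}+1)}{(N_{j-1}+1)^{1/2}}$ and leaves the exponential sum $S_j$ over the $j$-th block (or the sub-interval maximum thereof), of length at most $L=N_j-N_{j-1}$. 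The only block data I would use are $L\le(\tau-1)X_{j-1}+1$ (from $N_j\le X_j=\tau X_{j-1}$ and $N_{j-1}>X_{j-1}-1$) and $N_{j-1}+1>X_{j-1}$; these convert the $L$- and $(L+M)$-dependence coming out of Lemma~\ref{lemma4.3} into the $X_{j-1}$-dependence in the statement.

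Next I would apply Lemma~\ref{lemma4.3} with phase $f(x)=-\tfrac{t}{2\pi}\log x$ and $N=N_{j-1}$, and with differencing parameter $M$ a positive integer chosen as large as the constraint $M\le q\,t_2^{-1/3}X_{j-1}$ permits, where $q$ and $t_2$ are among the free parameters; assumptions such as $q^3\le t_2\le q^3 t$ guarantee both $M\ge1$ (so the lemma is non-vacuous) and $M\le X_{j-1}$ (used below). Since $2\pi\big(f(n+m)-f(n)\big)=-t\big(\log(n+m)-\log n\big)$, the inner shifted sums of Lemma~\ref{lemma4.3} are exactly the $S'_m(K)=\sum_{n=N_{j-1}+1}^{N_{j-1}+K}e^{-it[\log(n+m)-\log n]}$ in the statement, and Lemma~\ref{lemma4.3} yields
\[
|S_j|^2\le\frac{(L+M)L}{M}+\frac{2(L+M)}{M}\sum_{m=1}^{M-1}\Big(1-\frac{m}{M}\Big)\max_{K\le L}|S'_m(K)|.
\]
Taking square roots, using $(a+b)^{1/2}\le a^{1/2}+b^{1/2}$, and inserting $L\le(\tau-1)X_{j-1}+1$ together with $L+M\le(\tau-1)X_{j-1}+1+q\,t_2^{-1/3}X_{j-1}$ in the first summand and $L+M\le\tau X_{j-1}+1$ in the second, I get exactly the bound claimed for $|S_j|$.

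Within Lemma~\ref{lemma4.4} itself the argument is a clean substitution, so the part that really wants care is the choice of $M$. The exponent $t_2^{-1/3}$ is not free: it is forced by the balancing of the two resulting terms once $S'_m(K)$ is itself estimated by a van der Corput bound at the next stage (the shifted phase $-\tfrac{t}{2\pi}[\log(n+m)-\log n]$ has second derivative of size $\asymp tm/X_{j-1}^3$, so $|S'_m(K)|$ is of size $\asymp(tm/X_{j-1})^{1/2}$ up to lower-order terms), and by the requirement that the ensuing sum over $j$ of $\frac{\log(N_{j-1}+1)}{(N_{j-1}+1)^{1/2}}|S_j|$ collapse to the $t^{1/6}(\log t)^2$ main term of Theorem~\ref{theorem1.2}; one must also check that the constraints linking $q$, $t_2$ and the admissible range of $t$ keep $M$ a legitimate positive integer on every block simultaneously. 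A minor secondary point is verifying that the quantity $\max\{1,b_2,\dots,b_L\}$ delivered by Lemma~\ref{lemma2.5} is genuinely controlled by the interval-sum estimate that Lemma~\ref{lemma4.3} produces, so that the reduction to $|S_j|$ is justified.
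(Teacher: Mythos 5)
Your derivation is correct and is precisely what the paper (which merely cites Cheng--Graham for this lemma) intends: apply Lemma~\ref{lemma4.3} with $f(x)=-\tfrac{t}{2\pi}\log x$, $N=N_{j-1}$, so that $e^{2\pi i[f(n+m)-f(n)]}=e^{-it[\log(n+m)-\log n]}$ gives exactly $S'_m(K)$, then take square roots via $(a+b)^{1/2}\le a^{1/2}+b^{1/2}$ and insert $L\le(\tau-1)X_{j-1}+1$, $M=qX_{j-1}/t^{1/3}\le qt_2^{-1/3}X_{j-1}$ (since $t\ge t_2$) in the first summand and $L+M\le\tau X_{j-1}+1$ (since $q\le t_2^{1/3}$) in the second.

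The ``minor secondary point'' you flag at the end is actually the only genuine wrinkle, and it lies in the paper's setup rather than in your argument: Lemma~\ref{lemma2.5}, as stated, bounds $|\sum a_i e^{ix_i}|$ by the maximum over \emph{arbitrary} index subsets, whereas Lemma~\ref{lemma4.3} controls only \emph{initial segments} $\{N_{j-1}+1,\dots,N_{j-1}+K\}$, so the two do not plug together as written. The intended tool is the partial-summation (Abel) inequality of Cheng--Graham, which for a monotone coefficient sequence bounds the weighted sum by $a_{\max}\max_{K\le L}\bigl|\sum_{n=N_{j-1}+1}^{N_{j-1}+K}e^{ix_n}\bigr|$. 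This prefix-max version can be recovered from Lemma~\ref{lemma2.5}'s own convexity proof by maximizing $g$ over the ordered simplex $0\le a_1\le\cdots\le a_L\le 1$ (whose vertices are suffix indicators) instead of over the full cube $[0,1]^L$; with that reading, your identification of $S_j$ with the quantity controlled by Lemma~\ref{lemma4.3} is justified and the lemma follows as you describe.
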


\begin{lemma}\label{lemma4.5} \cite[p. 1276]{1}
Let $V = \frac{\pi X_{j-1}^3}{m t}$, $W = \frac{\pi (\tau + 1)^3 X_{j-1}^3}{m t}$. Then
\begin{equation}\label{22}
\begin{aligned}
\max_{K \leq L} |S'_m(K)| &\leq \frac{1}{5} \left( \frac{8 (\tau - 1) (\tau + 1)^{3/2} m^{1/2} t^{1/2}}{\pi^{1/2} X_{j-1}^{1/2}} + \frac{8 (\tau + 1)^{3/2} m^{1/2} t^{1/2}}{\pi^{1/2} X_{j-1}^{3/2}} \right. \\
&\quad + \frac{8 \pi^{1/2} (\tau + 1)^{3/2} X_{j-1}^{3/2}}{m^{1/2} t^{1/2}} + \frac{15 (\tau - 1) m t}{\pi X_{j-1}^2} \\
&\quad \left. + \frac{15 m t}{\pi X_{j-1}^3} + 15 \right).
\end{aligned}
\end{equation}
\end{lemma}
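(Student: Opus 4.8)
The plan is to apply the van der Corput estimate of Lemma~\ref{lemma4.1} to $S'_m(K)$, regarded as $\sum_{n=N_{j-1}+1}^{N_{j-1}+K}e^{2\pi i g(n)}$ with phase
\[
g(x)=-\frac{t}{2\pi}\bigl(\log(x+m)-\log x\bigr),
\]
taking the auxiliary quantities of that lemma to be exactly the $V$ and $W$ in the statement. First I would differentiate,
\[
g'(x)=\frac{tm}{2\pi\,x(x+m)},\qquad g''(x)=-\frac{tm(2x+m)}{2\pi\,x^{2}(x+m)^{2}},
\]
so that $g$ has two continuous derivatives on the interval of summation and $|g''(x)|=\dfrac{tm(2x+m)}{2\pi\,x^{2}(x+m)^{2}}$.

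Next I would record the ranges furnished by the dissection of Section~4.2: every $n$ occurring in $S'_m(K)$ satisfies $X_{j-1}<N_{j-1}+1\le n\le N_j\le X_j=\tau X_{j-1}$, while $1\le m\le M-1$ with $M\le q\,t^{-1/3}X_{j-1}$, so that $m<q\,t_2^{-1/3}X_{j-1}\le X_{j-1}$ once $t\ge t_2$ (choosing $t_2\ge q^{3}$). On this range I would check the hypothesis $\tfrac1W\le|g''(x)|\le\tfrac1V$ of Lemma~\ref{lemma4.1}, where $\tfrac1V=\tfrac{mt}{\pi X_{j-1}^{3}}$ and $\tfrac1W=\tfrac{mt}{\pi(\tau+1)^{3}X_{j-1}^{3}}$. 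The upper bound $|g''(x)|\le\tfrac1V$ is equivalent to $X_{j-1}^{3}(2x+m)\le 2x^{2}(x+m)^{2}$, and this follows by expanding $2x^{2}(x+m)^{2}=2x^{4}+4mx^{3}+2m^{2}x^{2}$ and comparing it with $2xX_{j-1}^{3}+mX_{j-1}^{3}$ term by term, using $x>X_{j-1}$. For the lower bound, dropping $m$ from the factor $2x+m$ gives $|g''(x)|\ge\dfrac{tm}{\pi\,x(x+m)^{2}}$, and then $x<\tau X_{j-1}$ together with $x+m\le(\tau+1)X_{j-1}$ (valid since $m\le X_{j-1}$) yields $|g''(x)|\ge\dfrac{tm}{\pi\,\tau(\tau+1)^{2}X_{j-1}^{3}}>\dfrac{tm}{\pi(\tau+1)^{3}X_{j-1}^{3}}=\tfrac1W$, the last inequality being merely $\tau<\tau+1$. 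I would also note $V<W$ (immediate, as $\tau>0$) and $W>1$, since $mt<q\,t^{2/3}X_{j-1}\le q\,X_{j-1}^{3}<\pi(\tau+1)^{3}X_{j-1}^{3}$ by $X_{j-1}\ge t^{1/3}$ and $q<\pi(\tau+1)^{3}$.

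With this bracketing in hand, Lemma~\ref{lemma4.1} applied to $[N_{j-1}+1,N_{j-1}+K]$ gives, for every $K\le L$ and uniformly in $K$,
\[
|S'_m(K)|\le\frac15\Bigl(\frac{K}{V}+1\Bigr)\bigl(8W^{1/2}+15\bigr)\le\frac15\Bigl(\frac{L}{V}+1\Bigr)\bigl(8W^{1/2}+15\bigr).
\]
Substituting $L\le(\tau-1)X_{j-1}+1$, so that $\dfrac{L}{V}\le\dfrac{(\tau-1)mt}{\pi X_{j-1}^{2}}+\dfrac{mt}{\pi X_{j-1}^{3}}$, together with $8W^{1/2}=\dfrac{8\pi^{1/2}(\tau+1)^{3/2}X_{j-1}^{3/2}}{m^{1/2}t^{1/2}}$, and multiplying out the two bracketed factors, I would recover exactly the six summands in~\eqref{22}, which completes the proof.

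The step I expect to be the main obstacle is the verification of the two-sided bound $\tfrac1W\le|g''(x)|\le\tfrac1V$ uniformly over $[N_{j-1}+1,N_{j-1}+L]$, along with the accompanying bookkeeping that keeps $M$ (and hence each $m$) small relative to $X_{j-1}$ while still guaranteeing $W>1$; this is precisely where the admissible ranges of $\tau$, $q$ and $t_2$ are used. Once that is settled, the remainder is a direct substitution into Lemma~\ref{lemma4.1} followed by an elementary expansion.
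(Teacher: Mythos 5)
Your proposal is correct and follows exactly the argument one would expect behind the citation to Cheng–Graham: the paper itself gives no proof of Lemma~\ref{lemma4.5}, merely citing \cite[p.~1276]{1}, and the analogous estimate Lemma~\ref{lemma4.2} (which the paper does prove) is obtained by the identical recipe, namely apply the second-derivative test of Lemma~\ref{lemma4.1} with a chosen $V<W$ bracketing $|f''|$ on the relevant $j$-block. Your phase $g(x)=-\tfrac{t}{2\pi}(\log(x+m)-\log x)$, the computation $|g''(x)|=\tfrac{tm(2x+m)}{2\pi x^2(x+m)^2}$, the verification that $\tfrac1W\le|g''|\le\tfrac1V$ on $X_{j-1}<x\le\tau X_{j-1}$ (using $m\le X_{j-1}$ for the $(\tau+1)$ in the lower bound), the checks $V<W$ and $W>1$, and the final expansion of $\tfrac15(\tfrac{L}{V}+1)(8W^{1/2}+15)$ into the six displayed terms all check out and reproduce~\eqref{22} exactly. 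One cosmetic remark: you write $x<\tau X_{j-1}$, whereas equality $x\le\tau X_{j-1}$ is all that is available (since $N_j\le X_j$); this is harmless because the slack $\tau<\tau+1$ in the last step is strict.
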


\begin{lemma}\label{lemma4.6} \cite[p. 1277]{1}
For $0 \leq m \leq M$,
\begin{align*}
\sum_{m=1}^{M-1} \left(1 - \frac{m}{M}\right) m^{1/2} &\leq \frac{4}{15} M^{3/2}, \\
\sum_{m=1}^{M-1} \left(1 - \frac{m}{M}\right) m^{-1/2} &\leq \frac{4}{3} M^{1/2}, \\
\sum_{m=1}^{M-1} \left(1 - \frac{m}{M}\right) m &= \frac{1}{6} M^2, \\
\sum_{m=1}^{M-1} \left(1 - \frac{m}{M}\right) &= \frac{1}{2} M.
\end{align*}
\end{lemma}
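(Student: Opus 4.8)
The final statement is Lemma~\ref{lemma4.6}, a collection of four elementary estimates for weighted sums of powers of $m$ against the triangular weight $1-m/M$. The plan is to treat each of the four inequalities separately, since each reduces to a standard summation or integral comparison; none requires the exponential-sum machinery of the preceding lemmas.

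\medskip

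For the two exact identities, I would simply compute. We have $\sum_{m=1}^{M-1}(1-m/M)m = \sum_{m=1}^{M-1}m - \tfrac{1}{M}\sum_{m=1}^{M-1}m^2 = \tfrac{(M-1)M}{2} - \tfrac{1}{M}\cdot\tfrac{(M-1)M(2M-1)}{6} = \tfrac{M^2-1}{6}\le \tfrac{1}{6}M^2$, using the closed forms for $\sum m$ and $\sum m^2$. Likewise $\sum_{m=1}^{M-1}(1-m/M) = (M-1) - \tfrac{1}{M}\cdot\tfrac{(M-1)M}{2} = \tfrac{M-1}{2}\le\tfrac{1}{2}M$. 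Strictly the statement writes these as equalities, so one either interprets $M$ as allowed to be taken slightly adjusted, or—more cleanly—cites the inequality form; in the write-up I would record the exact values $\tfrac{M^2-1}{6}$ and $\tfrac{M-1}{2}$ and note they are $\le \tfrac16 M^2$ and $\le\tfrac12 M$ respectively, which is all that is used downstream.

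\medskip

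For the two fractional-power sums, the natural tool is monotonicity plus integral comparison. Since $m\mapsto (1-m/M)m^{1/2}$ and $m\mapsto(1-m/M)m^{-1/2}$ are (piecewise) dominated by the corresponding continuous functions on $[0,M]$, I would bound
\[
\sum_{m=1}^{M-1}\Bigl(1-\frac{m}{M}\Bigr)m^{1/2} \le \int_0^M\Bigl(1-\frac{x}{M}\Bigr)x^{1/2}\,dx
= \Bigl[\frac{2}{3}x^{3/2} - \frac{2}{5M}x^{5/2}\Bigr]_0^M = \frac{2}{3}M^{3/2} - \frac{2}{5}M^{3/2} = \frac{4}{15}M^{3/2},
\]
and similarly
\[
\sum_{m=1}^{M-1}\Bigl(1-\frac{m}{M}\Bigr)m^{-1/2} \le \int_0^M\Bigl(1-\frac{x}{M}\Bigr)x^{-1/2}\,dx
= \Bigl[2x^{1/2} - \frac{2}{3M}x^{3/2}\Bigr]_0^M = 2M^{1/2} - \frac{2}{3}M^{1/2} = \frac{4}{3}M^{1/2}.
\]
The only point needing a word of care is the validity of $\sum f(m)\le\int_0^M f$: for the term $m^{1/2}$ the integrand $\phi(x)=(1-x/M)x^{1/2}$ is not monotone on all of $[0,M]$, so the crude ``sum $\le$ integral'' bound via left endpoints is not immediate on the increasing stretch. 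The clean fix is to compare $\phi(m)$ with $\int_m^{m+1}\phi(x)\,dx$ on the decreasing part and with $\int_{m-1}^m\phi(x)\,dx$ on the increasing part, or—simpler—to observe $\phi(m)\le\int_{m}^{m+1}\phi(x)\,dx + \max\phi'$-type correction and check the resulting constant still satisfies the stated inequality; alternatively one notes the maximum of $\phi$ is $O(M^{1/2})$ while the claimed bound $\tfrac{4}{15}M^{3/2}$ has a full extra power of $M$ in reserve, so a single subinterval's worth of slack is absorbed for $M$ past a small threshold, and small $M$ is checked directly. This bookkeeping with the non-monotone stretch is the only mild obstacle; everything else is direct computation. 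The same remark applies, even more comfortably, to the $m^{-1/2}$ sum, where the weight decays and the integrand is eventually decreasing.
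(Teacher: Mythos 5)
The paper gives no proof of Lemma~\ref{lemma4.6}; it is cited verbatim from reference~\cite{1}, so there is no ``paper's own proof'' to compare against. Your route --- closed forms for the two polynomial sums plus an integral comparison for the two fractional-power sums --- is the natural one, and your integral computations
\[
\int_0^M\Bigl(1-\tfrac{x}{M}\Bigr)x^{1/2}\,dx=\tfrac{4}{15}M^{3/2},\qquad
\int_0^M\Bigl(1-\tfrac{x}{M}\Bigr)x^{-1/2}\,dx=\tfrac{4}{3}M^{1/2}
\]
are correct, as is your observation that the stated ``equalities'' are really $\tfrac{M^2-1}{6}$ and $\tfrac{M-1}{2}$, i.e.\ they should be $\leq$.

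However, your treatment of the sum-versus-integral step for the $m^{1/2}$ term has a genuine gap, and the sketch as written does not close it. First, the directions in your ``clean fix'' are reversed: for $\phi$ decreasing one has $\phi(m)\geq\int_m^{m+1}\phi$, and for $\phi$ increasing one has $\phi(m)\geq\int_{m-1}^m\phi$, so the comparisons you name bound the sum from \emph{below}, not above. Second, the ``absorb one subinterval of slack'' idea gives $\sum\phi(m)\leq\tfrac{4}{15}M^{3/2}+O(M^{1/2})$, which is never $\leq\tfrac{4}{15}M^{3/2}$; an exact constant cannot be recovered that way for any $M$. The argument that actually works is to note that $\phi(x)=x^{1/2}-x^{3/2}/M$ is concave on $[0,M]$ (indeed $\phi''(x)=-\tfrac14 x^{-3/2}-\tfrac{3}{4M}x^{-1/2}<0$) and vanishes at both endpoints; then the trapezoid rule gives $\int_m^{m+1}\phi\geq\tfrac12\bigl(\phi(m)+\phi(m+1)\bigr)$, and summing over $m=0,\dots,M-1$ yields $\int_0^M\phi\geq\sum_{m=1}^{M-1}\phi(m)$ because the boundary contributions $\tfrac12\phi(0)+\tfrac12\phi(M)$ vanish. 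For the $m^{-1/2}$ term your concern is unnecessary: $\psi(x)=x^{-1/2}-x^{1/2}/M$ is strictly decreasing on $(0,M]$, so $\psi(m)\leq\int_{m-1}^m\psi$ directly gives $\sum_{m=1}^{M-1}\psi(m)\leq\int_0^{M-1}\psi\leq\int_0^M\psi$.
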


\begin{lemma}\label{lemma4.7}
Let $t \geq e^6$. Then
\[
\left| \sum_{t^{1/3} < n \leq t^{2/3}} \frac{\log n}{n^s} \right| \leq c_1 t^{1/6} (\log t)^2 + c_2 t^{1/6} \log t + c_3 t^{1/6} + c_4 (\log t)^2 + c_5 \log t + c_6,
\]
where $c_1(\tau,q,t_2), c_2(\tau,q,t_2), c_3(\tau,q,t_2), c_4(\tau,t_2), c_5(\tau,q,t_2), c_6(\tau,q,t_2)$ are constants, and $\tau, q, t_2$ are parameters to be determined.
\end{lemma}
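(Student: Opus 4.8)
The plan is to mirror the structure of Lemma \ref{lemma4.2}, but now applying the two-step van der Corput method (Lemmas \ref{lemma4.3}--\ref{lemma4.6}) rather than the single exponent-pair estimate of Lemma \ref{lemma4.1}, since on the range $t^{1/3}<n\le t^{2/3}$ the second derivative $f''(x)=t/(2\pi x^2)$ is too large for Lemma \ref{lemma4.1} to be efficient. First I would split the interval $(t^{1/3},t^{2/3}]$ dyadically (really $\tau$-adically): set $X_j=\tau^j t^{1/3}$, $N_j=[X_j]$ for $j=0,1,\dots,J$ with $X_J\le t^{2/3}$, so that $J\le\lfloor \frac{\log t}{3\log\tau}\rfloor+1$, and write
\[
\left|\sum_{t^{1/3}<n\le t^{2/3}}\frac{\log n}{n^s}\right|\le\sum_{j=1}^J|S_j|,\qquad S_j:=\sum_{n=N_{j-1}+1}^{N_j}\frac{\log n}{n^s}.
\]
On each block I apply Lemma \ref{lemma2.5} (the partial-summation-free monotone rearrangement) to pull out the coefficient $\log(N_{j-1}+1)/(N_{j-1}+1)^{1/2}$ and reduce to bounding a pure exponential sum $\sum e^{-it\log n}$ of length $L=N_j-N_{j-1}\le(\tau-1)X_{j-1}+1$; the choice of the parameter $q$ enters through the Weyl-differencing length $M$, which one takes of size $\asymp q\,t_2^{-1/3}X_{j-1}$ (this is exactly the shape recorded in Lemma \ref{lemma4.4}, where $t_2$ is the lower cutoff on $t$).

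Next I would assemble the estimate for $|S_j|$: feed the bound of Lemma \ref{lemma4.5} for $\max_{K\le L}|S'_m(K)|$ into the square-root-of-a-sum appearing in Lemma \ref{lemma4.4}, use Lemma \ref{lemma4.6} to evaluate the weighted sums $\sum_{m}(1-m/M)m^{\pm 1/2}$, $\sum_m(1-m/M)m$, $\sum_m(1-m/M)$, and simplify. With $M\asymp q t_2^{-1/3}X_{j-1}$ and $X_{j-1}\ge t^{1/3}$, each of the six terms in \eqref{22} becomes, after the $m$-summation and the outer $M^{-1/2}(\tau X_{j-1}+1)^{1/2}$ factor, a monomial in $X_{j-1}$ times a power of $t$; combined with the $\log(N_{j-1}+1)/(N_{j-1}+1)^{1/2}$ prefactor this gives, for suitable exponents, $|S_j|$ bounded by a sum of terms of the form (constant depending on $\tau,q,t_2$) $\times X_{j-1}^{\alpha}(\log X_{j-1})^{\beta}t^{\gamma}$ with $\beta\in\{0,1\}$. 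Summing over $j=1,\dots,J$ then reduces to estimating the geometric-type sums $\sum_{j}X_{j-1}^{\alpha}(\log X_{j-1})^{\beta}$; since $X_{j-1}=\tau^{j-1}t^{1/3}$ these are bounded by geometric series (when $\alpha>0$ dominated by the last term $\asymp t^{2\alpha/3}$, when $\alpha<0$ by the first term $\asymp t^{\alpha/3}$, each multiplied by at most $(\log t^{2/3})^{\beta}$ or by $J\asymp \log t$), and the extra $J$ or $\log X_{j-1}$ factors are what promote one $\log t$ to $(\log t)^2$. Collecting the dominant contributions $t^{1/6}(\log t)^2$, $t^{1/6}\log t$, $t^{1/6}$, $(\log t)^2$, $\log t$ and $1$, and absorbing every smaller power of $t$ into these via $t\ge t_2\ge e^6$, yields the claimed six-term bound, with $c_1,\dots,c_6$ read off as explicit (if unwieldy) functions of $\tau,q,t_2$.

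The main obstacle I expect is purely bookkeeping rather than conceptual: after plugging Lemma \ref{lemma4.5} into Lemma \ref{lemma4.4} one gets a square root of a sum of six positive quantities, and to get a clean bound one must use $(\sum a_i)^{1/2}\le\sum a_i^{1/2}$ (or a weighted version) and then track all the resulting cross-products of powers of $X_{j-1}$, $m$, $t$, $M$, $\tau$, $q$ — a fair number of monomials — and verify that after the $m$-sum (Lemma \ref{lemma4.6}) and the $j$-sum every one of them is $\le$ a constant multiple of one of the six target terms on the range $t\ge e^6$. One must also be a little careful that $X_{j-1}\ge t^{1/3}$ is used correctly to discard the genuinely negative powers of $t$ into the constant $c_6$, and that the condition $t\ge t_2$ (with $t_2$ itself a free parameter, ultimately to be optimized numerically together with $\tau,q$) is invoked consistently so that the final constants depend only on $(\tau,q,t_2)$ as asserted. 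No single estimate is hard; the risk is an arithmetic slip in one of the dozen-odd monomials.
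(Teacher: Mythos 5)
Your proposal follows the paper's proof essentially step for step: the $\tau$-adic dissection $X_j=\tau^j t^{1/3}$, extraction of the coefficient $\log(N_{j-1}+1)/(N_{j-1}+1)^{1/2}$ via Lemma \ref{lemma2.5}, insertion of Lemmas \ref{lemma4.3}--\ref{lemma4.6} with $M$ taken to be $qX_{j-1}/t^{1/3}$ (bounded above by $qt_2^{-1/3}X_{j-1}$ once $t\ge t_2$), and the final $j$-summation producing the extra factor of $\log t$ --- and the paper, like you, leaves the monomial bookkeeping implicit. The only step you gesture at but do not spell out is the trivial branch for $e^6\le t\le t_2$, where the paper bounds the sum directly by $\int_1^{t^{2/3}}\log u\cdot u^{-1/2}\,du\le 2t_2^{1/3}\bigl(\tfrac23\log t_2-2\bigr)+4$ and absorbs this $t_2$-dependent constant into $c_6$.
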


\begin{proof}
For $e^6 \leq t \leq t_2$ with $t_2$ sufficiently large,
\[
\left| \sum_{t^{1/3} < n \leq t^{2/3}} \frac{\log n}{n^s} \right| \leq \sum_{1 < n \leq t^{2/3}} \frac{\log n}{n^{1/2}} \leq \int_1^{t^{2/3}} \frac{\log u}{u^{1/2}} du \leq 2 t_2^{1/3} \left( \frac{2}{3} \log t_2 - 2 \right) + 4.
\]
For $t \geq t_2$, let $\tau > 1$, $X_j = \tau^j t^{1/3}$ ($j=0,1,\dots,J$), $N_j = [X_j]$, with $X_j \leq t^{2/3}$, so that $J \leq \left[ \frac{\log t}{3 \log \tau} \right] + 1 < \frac{\log t}{3 \log \tau} + 1$. Then
\begin{equation}\label{19}
\left| \sum_{t^{1/3} < n \leq t^{2/3}} \frac{\log n}{n^s} \right| \leq \sum_{j=1}^J \left| \sum_{n=N_{j-1}+1}^{N_j} \frac{\log n}{n^s} \right|.
\end{equation}
Using Lemma \ref{lemma2.5},
\begin{equation}\label{20}
\left| \sum_{n=N_{j-1}+1}^{N_j} \frac{\log n}{n^s} \right| \leq \frac{\log(N_{j-1}+1)}{(N_{j-1}+1)^{1/2}} \max\{1, b_2, b_3, \dots, b_L\},
\end{equation}
where $L = N_j - N_{j-1} \leq (\tau - 1) X_{j-1} + 1$.

Let $S_j = \max\{1, b_2, \dots, b_L\}$, $f(x) = -\frac{t \log x}{2\pi}$, and $M = \frac{q X_{j-1}}{t^{1/3}}$ with $q \geq 2$. Using Lemmas \ref{lemma4.3}--\ref{lemma4.6}, we obtain an upper bound for $S_j$. Substituting this into \eqref{20} and then into \eqref{19}, and using bounds for sums over $j$, we obtain the desired result.
\end{proof}

\subsection{Estimation of $|\zeta'(1/2+it)|$}
Combining the results above, for $t \geq e^6$ we have
\begin{equation}\label{70}
|\zeta'(1/2+it)| \leq Q_1 t^{1/6} (\log t)^2 + Q_2 t^{1/6} \log t + Q_3 t^{1/6} + Q_4 (\log t)^2 + Q_5 \log t + Q_6,
\end{equation}
where the $Q_i$ are as given in the theorem.

This completes the proof of Theorem \ref{theorem1.2}.

\section{Conclusion}
Since Titchmarsh's 1985 result
\[
\zeta'(\sigma + it) \le \exp\left( \frac{C \log t}{\log \log t} \right), \quad \sigma \geq 1/2, \quad t \geq 10,
\]
the study of upper bounds for the derivative of the zeta function has seen limited progress. However, recently there have been many estimates for the absolute value of the zeta function on the 1-line and 1/2-line, see references \cite{2,9,12,15,19}. In this paper, using exponential sum methods, we systematically derive a formula for effective estimates of $|\zeta'(1/2+it)|$, further refining Titchmarsh's result. Our results may not be optimal, and seeking the best possible bounds and applying the method to higher derivatives of the zeta function are worthwhile directions for future research.

\end{document}